\patchcmd{\BR@backref}{\newblock}{\newblock[cited on p.~}{}{}
\patchcmd{\BR@backref}{\par}{]\par}{}{} \makeatother
\declaretheoremstyle[headfont=\bfseries, headpunct={:},
notefont=\bfseries, notebraces={}{} ]{hypstyle}
\declaretheorem[style=hypstyle, name={}]{hyp}
\theoremstyle{theorem} 
\declaretheorem[name=Theorem]{thm} 
\declaretheorem[name=Proposition]{prop}
\declaretheorem[name=Lemma]{lem}
\declaretheorem[name=Corollary]{cor}
\theoremstyle{definition} \declaretheorem[name=Definition]{defin}
\theoremstyle{remark} \declaretheorem[name=Remark]{rem}
\newcommand\cA{{\mathcal A}} \newcommand\cB{{\mathcal B}}
\newcommand\cG{{\mathcal G}} \newcommand\cH{{\mathcal H}}
 \newcommand\cJ{{\mathcal J}}
\newcommand\cP{{\mathcal P}} 
\newcommand\cR{{\mathcal R}} \newcommand\cT{{\mathcal T}}
\newcommand\cS{{\mathcal S}} 
\newcommand\cU{{\mathcal U}} 
\newcommand\sL{{\mathscr L}}
 \newcommand\bN{{\mathbb N}}
 \newcommand\bR{{\mathbb R}}
 \newcommand\bZ{{\mathbb Z}}
\renewcommand\L{{\mathbf L}}
 \let\emptyset\varnothing
\newcommand\ve{\varepsilon} 
\newcommand\vs{\varsigma} 
\newcommand{\abs}[1]{|{#1}|} \newcommand\Id{{\mathds{1}}}
\newcommand\leb{{\mathbf m}} \newcommand\uspace{X}
\newcommand\metr{\mathbf{d}}
\newcommand{\expan}{{\Lambda}} 
\newcommand{\dist}{{D}} 
\newcommand{\distexp}{{\alpha}} 
\newcommand{\modreg}{{a_{0}}} 
\newcommand{\sigtail}{{\sigma}} 
\newcommand{\ntail}{{n_0}} \newcommand{\epstail}{\ve_0}
 \newcommand{\epstailtwo}{\ve_1}
 \newcommand{\epstailfour}{\ve_2}
\newcommand\Cball{C_{ball}}
\newcommand\Ca{{e^{\modreg\epstail^\distexp}}}
\newcommand\Caa{{e^{-\modreg\epstail^\distexp}}}
\newcommand\diam{\operatorname{diam}}
\newcommand\cl{\operatorname{cl}} \newcommand\dimn{d}
\newcommand\ball{\cB}
\newcommand\proper{B_{0}}
\numberwithin{equation}{section}
\author{Peyman Eslami}
\address{Peyman Eslami\\
  Dipartimento di Matematica\\
  II Universit\`{a} di Roma (Tor Vergata)\\
  Via della Ricerca Scientifica, 00133 Roma, Italy.} \email{{\tt
	peslami7@gmail.com}} \title[Inducing schemes]{Inducing schemes for
  multi-dimensional piecewise expanding maps} \keywords{}
  \date{ February 14, 2020.}
  \thanks{The author has been supported by the MIUR Excellence Department Project Grant awarded to the Department of Mathematics, University of Rome Tor Vergata, CUP E83C18000100006.}
\begin{document}
\begin{abstract} 
	We construct inducing schemes for general multi-dimensional piecewise expanding maps where the base transformation is Gibbs-Markov and the return times have exponential tails. Such structures are a crucial tool in proving statistical properties of dynamical systems with some hyperbolicity.
\end{abstract}
\maketitle

\section{Introduction}
\label{sec:intro}

Statistical properties of chaotic dynamical systems have been a subject of interest for mathematicians and physicists in the past several decades. While such properties are better understood for uniformly hyperbolic systems, the same cannot be said about systems with non-uniform hyperbolicity. The reason is that there are many mechanisms for non-uniform behaviour (e.g. intermittency, existence of critical points or singularities, etc.) and usually they are mixed with regions (or periods) of uniformly hyperbolic behaviour. 
To treat this difficulty Young \cite{You98, You99} proposed an abstract framework to study such systems. She showed that if the system admits a certain structure, called a Young tower, then statistical properties such as rates of decay of correlations can be deduced using the analogy to Markov chains. Since then many other statistical properties have been studied assuming the existence of such structures. However, constructing such structures for various systems is not easy and requires a good understanding of the nature of non-uniformity of hyperbolicity. Even then, it is usually done in a case by case basis. 

The purpose of this article is to obtain such structures for general multi-dimensional expanding systems with discontinuities, that is, when the nature of non-uniformity is the presence of discontinuities. 
This is the first time inducing schemes are constructed in an optimal
way (with exponential tails) for general multi-dimensional piecewise
expanding maps. As pointed out earlier, virtually every statistical property can then be derived from the existence of such structures (existence and properties of absolutely continuous invariant measures, decay of correlations, central limit theorem, large deviations,
Berry-Esseen theorem, almost sure invariance principle, law of
iterated logarithm, etc.). 

An example of a dynamical system to which this method applies, but previous
methods do not, is available in \cite{EMV19}. As shown in \cite{EMV19}, the  results of the current paper are also relevant to proving statistical properties for non-uniformly expanding dynamical systems. This is because through inducing, one can replace the mechanism of non-uniformity of hyperbolicity with the presence of discontinuities. All of this is worked out for a family of a multidimensional, nonMarkov, nonConformal intermittent maps in \cite{EMV19}. Other systems to which the current framework can be applied are ``Hu-Vaienti''-type maps \cite{HuVai09} and ``Viana''-type maps \cite{Via97, Al00}. Finally, another motivation for this work comes from the study of multi-dimensional dispersing billiards. We refer the reader to the survery \cite{Sz17} for understanding the relevance of this work in providing insights into the problems surrounding the study of multi-dimensional billiards. 

The paper is organized in a top-down format in the sense that the main theorems are stated and proved asumming the lemmas and the tools that appear later in the paper.

As the technical details of our proofs may obscure the ``big picture'' it may help to have the following anology in mind. Imagine you have a chunk of cookie dough and a few cookie cutters and you want to make cookies. Your main objective is to use \textit{all} of the dough for this purpose. So you may think that you just expand the dough, use a cookie cutter to cut out your cookies and whatever dough is left over you expand and cut in a similar way until you have finshed the whole dough. But life is not so easy because someone else (the piecewise expanding map a.k.a. the devil) cuts, expands and folds the dough in complicated ways (yet with some restrictions, namely hypotheses \ref{hyp1}--\ref{hyp3}). Our theorems essentially say that you can beat the devil in this game and the proofs are essentially recipes for making cookies at an efficient pace (in a multi-dimensional setting) regardless of what the devil does to complicate the task. A crucial step is to make sure that cookies are cut out in such a way that the left over dough can be reused and that eventually the whole dough is used. 

\section{Setting and assumptions}
\label{sec:setting} Consider $\bR^{\dimn}$ endowed with the Euclidean
metric $\metr$ and the Lebesgue measure $\leb$. Let $\uspace \in \bR^{\dimn}$ be a
bounded, Borel measurable subset such that
\begin{equation}
 \label{eq:bdofspace} \sup_{\ve>0} \frac{\leb{(\partial_{\ve}X)}}{\ve}
 <\infty,
\end{equation} where $\partial \uspace = \cl \uspace \cap \cl
 (\bR^{\dimn}\setminus \uspace)$ is the topological boundary of
 $\uspace$ in $\bR^{\dimn}$ and $\partial_{\ve}\uspace=\{x \in
 \uspace: \metr(x, \partial \uspace) < \ve\}$. We consider a
 \textit{non-singular piecewise invertible map} $T$ on $\uspace$ with
 respect to the countable partition $\cP = \{O_h\}_{h \in \cH}$ of
 open subsets of $\uspace$. This means that $\leb(\uspace \setminus
 \bigcup_{h \in \cH} O_{h})= 0$ and the restrictions $T:O_h \to
 T(O_h)$ and their inverses are non-singular (i.e. $\forall h \in \cH$,
 $(T|_{O_{h}})_{*}(\leb|_{O_{h}})$ is equivalent to $\leb|_{TO_{h}}$)
 homeomorphisms of $O_h$ onto $T(O_h)$. It is notationally convenient
 to use $h: TO_{h} \to O_{h}$ to also denote an inverse branch of $T$ and use $\cH$ to
 denote the set of inverse branches of $T$. Accordingly, we denote the
 set of inverse branches of $T^n$, $n \in \bN$, by $\cH^n$ and the
 corresponding partition by $\cP^{n}$. We write $Jh$ for the
 Radon-Nikodym derivative $d(\leb \circ h)/ d\leb$. We make the
 following assumptions on our dynamical system.

\begin{rem}
Note that $\uspace$ is contained in $\cl \uspace$ (closure of $\uspace$), which
is the disjoint union of $int\, \uspace$ (interior of $\uspace$) and
$\partial\uspace$. Moreover, $\leb(\partial \uspace)=0$ because of
\eqref{eq:bdofspace}. So, we can restrict our dyna{}mics to $int\,
\uspace$, which is an open set. So, without loss of generality, we may
assume that $\uspace$ is an open subset of $\bR^\dimn$. Consequently, 
$I \subset \uspace$ is
open in $\uspace$ if and only if it is open in $\bR^\dimn$.
 \end{rem}

\begin{hyp}[Uniform expansion]\label{hyp1} For every $h \in \cH$ and
$\ve >0$, denote
\begin{equation*}
 \expan_{h}(\ve)= \sup_{\{x,y \in T(O_{h})\,: \metr(x,y)\le \ve\}}
 \frac{\metr(h(x), h(y))}{\metr(x,y)}.
\end{equation*} There exist $\epstailtwo>0$ and $\expan \in (0,1)$
 such that for every $h \in \cH$, $ \expan_{h}(\epstailtwo) \leq
 \expan <1 $. Set $\expan_{h}:=\expan_{h}(\epstailtwo)$. Note that for
 $h \in \cH^{n}$, we can define $\expan_{h}$ using $T^{n}$ and it is
 easy to verify that for all $h \in \cH^{n}$, $\expan_{h}(\epstailtwo)
 \leq \expan^{n}<1$. \vspace{0.2 cm} \end{hyp}
\begin{hyp}[Bounded distortion]\label{hyp2} There exist $\alpha \in
(0,1]$, $\tilde\dist \geq 0$ such that $\forall h \in \cH$, $\forall
x,y \in T(O_h)$
\begin{equation}
\label{eq:dist} Jh(x) \leq e^{\tilde\dist \metr(x,y)^\distexp} Jh(y).
\end{equation} Let $\dist = \tilde \dist/(1-\expan^{\distexp})$. As a
 consequence of uniform expansion, \eqref{eq:dist} holds for $h \in
 \cH^{n}$ uniformly for all $n\in\bN$ with $D$ instead of $\tilde D$.
 \end{hyp} 
\begin{hyp}[Controlled complexity]\label{hyp3} There exist $\ntail \in
\bN$, $\epstailfour >0$ and $0\le\sigtail < \expan^{-\ntail}-1$ such
that for every open set $I$, $\diam I \leq \epstailfour$, for every
$\ve < \epstailfour$,
\begin{equation}
 \label{eq:dyncomplexity} \sum_{\{h \in \cH^{\ntail},\, \leb(I \cap
 O_{h})>0\}} \frac{\leb(h(\partial_{\ve}T^{\ntail}(I \cap
 O_{h}))\setminus
 \partial_{\expan^{\ntail}\ve}I)}{\leb(\partial_{\expan^{\ntail}
 \ve}I)} \leq \sigtail < \expan^{-\ntail}-1.
\end{equation} Moreover, there exists a constant $\bar C < \infty$
 such that for every integer $1 \leq r < \ntail$, for every $\ve <
 \epstailfour$,
\begin{equation}
 \label{eq:dyncomplexitybound} \sum_{\{h \in \cH^{r},\, \leb(I \cap
 O_{h})>0\}} \frac{\leb(h(\partial_{\ve}T^{r}(I\cap O_{h})) \setminus
 \partial_{\expan^{r} \ve}I)}{\leb(\partial_{\expan^{r} \ve}I)} \leq
 \bar C.
\end{equation} We refer to the expression on the left-hand side of
 \eqref{eq:dyncomplexity} as the \textit{complexity expression}.
 \end{hyp}
\begin{rem} Often one can check \eqref{eq:dyncomplexity} for $\ntail
=1$ in which case there is no need to check
\eqref{eq:dyncomplexitybound}. \end{rem}
\begin{rem} \label{bdaction} Suppose $h \in \cH$ and $T|_{O_{h}}:O_{h}
\to TO_{h}$ has an extension $\bar T_{h}: \cl O_{h} \to \cl TO_{h}$
that is invertible, its inverse $\bar h$ satisfies condition
\ref{hyp1} and $\partial(TO_{h}) \subset \bar T_{h}(\partial O_{h})$.
Then if $A \subset O_{h}$, we have $\forall \ve < \epstailtwo$,
\begin{equation*}
 \begin{split} h(\partial_{\ve}TA ) =\bar h(\partial_{\ve}TA) &= \bar
 h\{y\in T(A):\metr(y,\partial (TA) < \ve\} \\ &\subset \{x\in
 A:\metr(Tx, \bar T_{h}(\partial A)) < \ve\}\\ &\subset \{x \in A:
 \metr(x, \bar h(\partial(TA)))< \expan_{h}\ve\}\\ &\subset\{x \in A:
 \metr(x, \partial A)<\expan_{h}\ve\} = \partial_{\expan_{h}\ve}(A).
 \end{split}
\end{equation*} This is a simple but useful fact to keep in mind when
 checking \eqref{eq:dyncomplexity}. \end{rem}

Fix 
\begin{equation} \label{eq:regepsdef}
\modreg > \dist/(1-\expan^{\distexp})=\tilde
 \dist/(1-\expan^{\distexp})^{2},  \qquad \epstail = \min\{\epstailtwo, \epstailfour\}.
\end{equation}
  
\section{Statement of the main results}
\begin{thm}
\label{thm:GM} Suppose $T:\uspace \circlearrowleft$ satisfies
hypotheses \ref{hyp1}-\ref{hyp3}. There exist a refinement $\cP'$ of
the partition $\cP$ into open sets (mod $0$) and a function
$\tau:\uspace\to\bZ^+$ constant on elements of $\cP'$ such that

\begin{enumerate} [label=(\alph*)] \item The map $G=T^\tau:\uspace
\circlearrowleft$ is a Gibbs-Markov map with finitely many images.
\item $\leb(\tau>n)\le const \cdot \kappa^n$ for some $\kappa \in
(0,1)$.
\end{enumerate}
 \end{thm}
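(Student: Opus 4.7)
The plan is to construct $\cP'$ and $\tau$ by the recursive ``cookie-cutter'' algorithm alluded to in the introduction. I would first fix a reference open ball $\proper \subset \uspace$ of radius comparable to $\epstail$, sitting well inside $\uspace$ so that a slightly enlarged concentric ball still lies in $\uspace$; this $\proper$ will play the role of the (essentially unique) image of $G$. A short preprocessing step dealing with a region near $\partial \uspace$ introduces at most finitely many auxiliary images to which the Gibbs--Markov property extends directly.

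The induction proceeds in blocks of $\ntail$ iterates. Let $W_0 = \uspace$. At stage $k \ge 1$, let $W_k$ denote the set of points not yet assigned a return time, and examine the restriction $\cP^{k\ntail}|_{W_k}$. For every atom $\omega$ with $T^{k\ntail}(\omega) \supset \proper$ (with a small definite margin so that the inverse branch $h_\omega$ of $T^{k\ntail}$ is defined on a slight enlargement of $\proper$), declare $h_\omega(\proper)$ to be an element of $\cP'$ with $\tau \equiv k\ntail$ on it, and set $W_{k+1} = W_k \setminus \bigcup_\omega h_\omega(\proper)$. Part (a) then follows immediately from the construction: each atom of $\cP'$ maps bijectively onto $\proper$ under $G$, so $G$ is Markov with a single image, and since $h_\omega$ is a composition of inverse branches of $T$ satisfying hypotheses~\ref{hyp1}--\ref{hyp2} it inherits uniform contraction by $\expan^{k\ntail}$ together with H\"older distortion with constant $\dist$, yielding the Gibbs bound.

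The main technical work is the tail estimate (b), for which I would show $\leb(W_k) \le C \kappa^k$ for some $\kappa \in (0,1)$. The key observation is that an atom $\omega \in \cP^{k\ntail}|_{W_k}$ fails to contribute a cookie precisely when $T^{k\ntail}(\omega)$ meets $\proper$ but does not engulf it, which forces $\omega$ to lie in a thin tube of width $\lesssim \expan^{k\ntail}\epstail$ around a preimage of $\partial \proper$. I would then track the Lebesgue measure of an appropriate $\ve$-neighborhood of the boundary of the residue $W_k$. Applying hypothesis~\ref{hyp3} (with $I$ taken to be the image of a surviving atom under $T^{k\ntail}$) together with~\eqref{eq:bdofspace}, one derives a one-step recursion of the form
\begin{equation*}
\leb(\partial_\ve W_{k+1}) \le \expan^{\ntail}(1 + \sigtail) \cdot \leb(\partial_\ve W_k),
\end{equation*}
where the factor $\expan^{\ntail}$ records the contraction by inverse branches and $(1 + \sigtail)$ couples the old boundary of $W_k$ with the new boundary cut by the partition $\cP^{\ntail}$. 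The assumption $\sigtail < \expan^{-\ntail} - 1$ is exactly what makes $\kappa := \expan^{\ntail}(1 + \sigtail) < 1$; evaluating the recursion at a fixed $\ve$ of order $\epstail$ and using that $W_k$ sits inside this neighborhood then yields the exponential bound.

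The main obstacle is keeping the bookkeeping clean across stages: each cookie excision introduces new holes, so $\partial W_{k+1}$ consists of the old boundary of $W_k$ (contracted under pullback), the cuts of the partition $\cP^{\ntail}$ acting on the residue, and the boundaries of the newly-cut cookies. The sharp constant $\sigtail$ must be used at every stage rather than the auxiliary $\bar C$ from \eqref{eq:dyncomplexitybound}, otherwise one accumulates a factor $\bar C^k$ and loses the geometric bound; thus the induction must be set up so that the $\ntail$-step version of hypothesis~\ref{hyp3} is applied at every stage, with \eqref{eq:dyncomplexitybound} used only to absorb the intermediate times $1 \le r < \ntail$ into a single multiplicative constant. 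Once this is arranged, (a) and (b) follow as outlined.
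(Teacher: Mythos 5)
Your plan has the right flavor (iterate, excise pieces that cover a reference set, use Hypothesis \ref{hyp3} and the gap $\sigtail<\expan^{-\ntail}-1$ to control the residue), but two of its load-bearing steps fail. First, the design with a \emph{single} target ball $\proper$ cannot work under hypotheses \ref{hyp1}--\ref{hyp3} alone: there is no transitivity or ergodicity assumption, so nothing guarantees that the image $T^{k\ntail}(\omega)$ of any atom ever engulfs, or even meets, the particular ball you fixed. Your claimed dichotomy --- ``$\omega$ fails to contribute precisely when $T^{k\ntail}(\omega)$ meets $\proper$ but does not engulf it, hence lies in a thin tube'' --- is false: the image can be small, or large but located in a completely different part of $\uspace$. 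The paper avoids this by taking as targets a \emph{finite partition} $\cR$ of all of $\uspace$ into small nice sets (\Cref{lem:partition1}), so that any piece that has become $\delta_0$-regular (contains a ball of radius $\delta_0$) automatically contains \emph{some} element of $\cR$; this is why the conclusion is ``finitely many images'' rather than one, and why the full-branch upgrade (\Cref{cor:fullGM}) needs ergodicity.

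Second, the tail estimate is not established. The recursion $\leb(\partial_\ve W_{k+1})\le \expan^{\ntail}(1+\sigtail)\leb(\partial_\ve W_k)$ is not what Hypothesis \ref{hyp3} gives: the correct statement (\Cref{p_growth}) reads $\abs{\partial_\ve\cT^{\ntail}\cG}\le(1+\Ca\sigtail)\abs{\partial_{\expan^{\ntail}\ve}\cG}+\zeta_1\abs{\cG}\ve$, where the contraction enters through the rescaling $\ve\mapsto\expan^{\ntail}\ve$ and the unavoidable additive term $\zeta_1\abs{\cG}\ve$ comes from the artificial chopping (\Cref{lem:divisibility}) needed because Hypothesis \ref{hyp3} only applies to sets $I$ with $\diam I\le\epstail$, while images of atoms can be large --- a step you omit entirely. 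Because of the additive term the boundary layer does not decay geometrically; it only recovers to a fixed level $\proper\ve$ (in the paper's sense of $\proper$-properness, \Cref{prop:bd_invariance}). Moreover, even granting your recursion, $\leb(W_k)\le\leb(\partial_{\epstail}W_k)$ requires $W_k$ to contain no ball of radius $\epstail$, which is exactly what is to be proved, not an input. The actual mechanism for the exponential tail is: after a uniformly bounded recovery time the surviving family is $\proper$-proper, hence at least $2/3$ of its mass sits on $\delta_0$-regular pieces (\Cref{rem:delzero}), each of which contains an element of $\cR$ carrying a definite proportion $t$ of its mass (\Cref{lem:fixedratio}); the leftover after excision remains $\bar C_{\cR}\proper$-proper (\Cref{lem:remainderfam}), so the step can be repeated. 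Your proposal contains no substitute for this fixed-proportion-removal argument, and without it neither (a) nor (b) is reached.
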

 
\begin{rem}
    By a Gibbs-Markov map we mean a piecewise expanding map having uniform expansion and bounded distortion.
\end{rem}
    Under an ergodicity assumption, the induced map can be
 upgraded to a full-branched Gibbs-Markov map.
\begin{cor} \label{cor:fullGM}Suppose $T:\uspace \circlearrowleft$
satisfies hypotheses \ref{hyp1}-\ref{hyp3} and $(T, \leb)$ is ergodic.
There exist an open set $Z\subset \uspace$ and a refinement $\cP''$ of the
partition $\cP$ into open sets (mod $0$) such that $Z$ is a union of
elements of $\cP''$ and there exists a map $\tilde \tau: Z \to\bZ^+$
constant on elements of $\cP''$ such that

\begin{enumerate} [label=(\alph*)] \item The map $\tilde G=
T^{\tilde\tau}:Z\circlearrowleft$ is a full-branched Gibbs-Markov map.
\item $\leb(\tilde \tau>n)\le const \cdot\tilde \kappa^n$ for some
$\tilde \kappa \in (0,1)$.
\end{enumerate}
\end{cor}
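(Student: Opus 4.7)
The plan is to apply Theorem~\ref{thm:GM} to obtain the Gibbs--Markov induced map $G=T^\tau$ with refined partition $\cP'$ and finitely many images $Y_1,\dots,Y_k$, and then to realize $\tilde G$ as the first-return map of $G$ to one of these images. Concretely, I would fix $Y:=Y_i$ (to be specified below), set
\[
\rho(x)=\min\{n\ge 1:G^n(x)\in Y\},\qquad \tilde\tau(x)=\sum_{j=0}^{\rho(x)-1}\tau(G^j(x)),
\]
$\tilde G=G^\rho=T^{\tilde\tau}$, and $Z=Y$. The partition $\cP''$ would agree with $\cP'$ outside $Z$, while inside $Z$ its atoms are those of the joint refinement $\bigvee_{j=0}^{\rho-1}G^{-j}\cP'$; equivalently, each atom of $\cP''|_Z$ is a maximal set of points sharing an itinerary $Y\to Y_{i_1}\to\cdots\to Y_{i_{\rho-1}}\to Y$ with $Y_{i_\ell}\neq Y$. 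Since the images $Y_j$ of $G$ are unions of $\cP'$-atoms (by the Markov structure of $G$), $Z$ is automatically a union of $\cP''$-atoms and open.

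To select $Y$, I would exploit the fact that $G$ has only finitely many images, so the induced dynamics on $\{Y_1,\dots,Y_k\}$ is described by a finite directed graph. Ergodicity of $(T,\leb)$ together with positivity of $\leb$ on each image forces this graph to have a unique recurrent communicating class (by a standard saturation argument: two distinct recurrent classes would give rise to two non-trivial $T$-invariant sets). Picking $Y$ in the recurrent class, the classical finite-state return-time estimate gives geometric tails $\leb(\rho>k)\le C\theta^k$ for some $\theta\in(0,1)$. Full-branchedness of $\tilde G$ then follows directly from the Markov structure: since each $\cP'$-atom is mapped by $G$ onto its full image, iterating along an itinerary shows that $\tilde G$ sends each atom of $\cP''|_Z$ surjectively onto $Y=Z$. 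Uniform expansion of $\tilde G$ is inherited from that of $G$, and bounded distortion passes to $\tilde G$ because the distortion constant for $G^n$ stays uniformly bounded in $n$ (the geometric factor $\expan^j$ in the $n$-step distortion inequality making the cumulative sum telescopic).

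The main obstacle, and the only genuinely nontrivial step, is upgrading the geometric tails of $\rho$ and the exponential tails of $\tau$ into exponential tails of $\tilde\tau$. Naive union bounds of the form $\{\tilde\tau>n\}\subset\{\rho>N\}\cup\bigcup_{j<N}\{\tau\circ G^j>n/N\}$ yield only stretched-exponential tails, so I would instead estimate the moment-generating function directly. Letting $\mu$ denote the bounded $G$-invariant probability measure (a standard byproduct of the Gibbs--Markov property of $G$), the Markov structure together with bounded distortion would yield
\[
\int e^{s\tilde\tau}\Id_{\{\rho=k\}}\,d\mu\le C\Bigl(\int e^{s\tau}\,d\mu\Bigr)^{k}
\]
for all sufficiently small $s>0$, with $C$ independent of $k$. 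Since $\int e^{s\tau}\,d\mu\to 1$ as $s\to 0^+$, one can pick $s$ so small that $\int e^{s\tau}\,d\mu<1/\theta$; summing over $k\ge 1$ then gives $\int e^{s\tilde\tau}\,d\mu<\infty$, and Markov's inequality yields $\leb(\tilde\tau>n)\le\mathrm{const}\cdot e^{-sn}$, which is the desired exponential bound with $\tilde\kappa=e^{-s}$.
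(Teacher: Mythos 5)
Your proposal follows essentially the same route as the paper: induce via \Cref{thm:GM}, take the first-return map of $G$ to one of its finitely many images, set $\tilde\tau=\sum_{j<\rho}\tau\circ G^{j}$, deduce full-branchedness from the Markov property (the images being pairwise disjoint elements of $\cR$, any return necessarily covers all of $Z$), and get exponential tails from the finite-image structure. The paper merely asserts that exponential tails of $\rho$ and of $\tau$ yield exponential tails of $\tilde\tau$, whereas you rightly identify this as the nontrivial step and sketch the standard moment-generating-function argument; note, however, that your displayed inequality $\int e^{s\tilde\tau}\Id_{\{\rho=k\}}\,d\mu\le C\bigl(\int e^{s\tau}\,d\mu\bigr)^{k}$ cannot be summed as written, since $\int e^{s\tau}\,d\mu\ge e^{s}>1$; the bound must carry the extra factor $\theta^{k}$ coming from $\mu(\rho=k)\le C\theta^{k}$ (which your condition $\int e^{s\tau}\,d\mu<1/\theta$ shows you intended), and extracting it requires restricting the decoupled product to itineraries that avoid $Z$ at intermediate times.
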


\begin{rem}
	It follows from the proof of \Cref{cor:fullGM} that     given
	any open set $U \subset \bR^\dimn$, there exists an open cube inside
	$U$ that can be chosen as the set $Z$.
\end{rem}

By \cite{You98, You99}, if $(T,\leb)$ is mixing and admits a
full-branched inducing scheme as above, then $\gcd\{n \ge 1:
\leb(\{\tilde \tau=n\})>0\} = 1$. The converse is also true, that is,
if we find a full-branched inducing scheme, as above, for which the
$\gcd$ of the return times is equal to $1$, then the system is mixing.
The next theorem provides such an inducing scheme under extra
conditions.

\begin{defin} We say that $Z \subset \uspace$ has a \emph{nice
	boundary} if there exists a constant $C_Z>0$ such that $ \forall
	\ve\ge 0$, $\leb(\partial_\ve Z) \le C_Z \ve$ and for every open
	set $I
	\subset\uspace$ containing $Z$,
	\begin{equation*}
		\leb(\partial_{\ve}(I \setminus \cl Z)\setminus\partial_\ve I)
		\le C_Z\leb(\partial_\ve I).
	\end{equation*}
\end{defin}
\begin{rem} Many geometric shapes have nice boundaries. For example,
	sets with piecewise smooth boundaries (no cusps) including rectangles and
	balls.
\end{rem}

\begin{defin} We say that $Z \subset \uspace$ is \emph{fully
recurrent} (at times $\{n_j\}_{j=1}^K$) if there exist $K \ge 2$
positive integers $\{n_j\}_{j=1}^K$ such that $\gcd\{n_j\}_{j=1}^K=1$
and the following holds: There exist inverse branches $h_{n_1} \in
\cH^{n_1}$, $h_{n_2}\in \cH^{n_2}$, \dots, $h_{n_K} \in \cH^{n_K}$
such that $O_{h_{n_1}}, \dots, O_{h_{n_K}}$ are pairwise disjoint and
for every $j = 1, \dots, K$, $$T^{n_j}(O_{h_{n_j}} \cap Z) \supset
Z.$$
\end{defin}

\begin{thm}
\label{thm:fullbranch} Suppose $T:\uspace\circlearrowleft$ satisfies
	hypotheses \ref{hyp1}-\ref{hyp3}. In addition, suppose that for
	every $\delta >0$ there exists $Z\subset \uspace$ of $\diam Z \le
	\delta$ that is fully recurrent and has a nice boundary. Then,
	there exist $\delta'$ such that $\forall \delta \le \delta'$ there exist a refinement $\cP''$ of the partition $\cP$ into open
	sets (mod $0$) such that $Z=Z(\delta)$ is a union of elements of $\cP''$ and
	there exists a map $\tilde\tau: Z \to\bZ^+$ constant on elements
	of $\cP''$ such that
	\begin{enumerate}[label=(\alph*)]
  		\item \label{prop-fullM-fullbr} The map $\tilde
  		G=T^{\tilde\tau}:Z\circlearrowleft$ is a full-branched
  		Gibbs-Markov map.
		\item \label{prop-fullM-gcd} $\gcd\{n \ge 1: \leb(\{\tilde
		\tau=n\})>0\} = 1$.
		\item \label{prop-fullM-tails}$\leb(\tilde \tau>n)\le
		const\cdot\tilde \kappa^n$for some $\tilde \kappa \in (0,1)$.
	\end{enumerate}
\end{thm}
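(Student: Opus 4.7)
The plan is to adapt the construction underlying \Cref{thm:GM} and \Cref{cor:fullGM}, with $Z$ fixed in advance by the fully recurrent hypothesis rather than chosen freely. The nice-boundary condition on $Z$ is what allows the complexity machinery of hypothesis \ref{hyp3} to be transported to the subsystem inside $Z$, while the fully recurrent property supplies $K$ ready-made full branches at times $n_1,\dots,n_K$; the latter will simultaneously enforce the $\gcd$ condition in part (b).

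First, I would fix $\delta'\le\epstail$ small enough that any $Z=Z(\delta)$ with $\diam Z\le\delta'$ lies below the scales at which \ref{hyp2}, \ref{hyp3}, and the construction of \Cref{thm:GM} operate; in particular so that $e^{\dist\delta^\distexp}$ is close to $1$ across $Z$. Given such a $\delta$, let $h_{n_1},\dots,h_{n_K}$ be the branches from the fully recurrent hypothesis. Because $T^{n_j}(O_{h_{n_j}}\cap Z)\supset Z$, each $h_{n_j}(Z)$ is an open subset of $O_{h_{n_j}}\cap Z$ on which $T^{n_j}$ is a homeomorphism onto $Z$, and these $K$ sets are pairwise disjoint by the disjointness of the $O_{h_{n_j}}$. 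I would declare them as the seed elements of $\cP''$, setting $\tilde\tau=n_j$ on $h_{n_j}(Z)$, and write $R_0=Z\setminus\bigcup_{j=1}^K\cl h_{n_j}(Z)$ for the remaining region.

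Next, I would iterate the inductive procedure from the proof of \Cref{thm:GM}/\Cref{cor:fullGM} on $R_0$: at each step $k\ge 1$, identify the $T^k$-preimages inside the current remainder $R_{k-1}$ whose image under $T^k$ is exactly $Z$, assign them the corresponding return time, add them to $\cP''$, and pass to the new remainder $R_k$. Uniform expansion together with bounded distortion (\ref{hyp1}--\ref{hyp2}) ensure that each step extracts a definite fraction of $\leb(R_{k-1})$. The complexity hypothesis \ref{hyp3}, combined with the nice-boundary condition (used to control the extra boundary terms $\partial_\ve(I\setminus\cl Z)$ that appear each time a $Z$-shaped piece is cut out of a set $I\subset R_{k-1}$), prevents complexity from overwhelming expansion. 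This yields parts (a) and (c) --- the full-branched Gibbs--Markov property and the exponential tail $\leb(\tilde\tau>n)\le C\tilde\kappa^n$ --- by the same arguments as in \Cref{thm:GM}.

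Part (b) is then immediate: the seed branches force $\{n_1,\dots,n_K\}\subset\{n\ge 1:\leb(\{\tilde\tau=n\})>0\}$, and $\gcd(n_1,\dots,n_K)=1$ by hypothesis. The main technical obstacle I anticipate lies in the inductive step: one must verify that each newly added piece is a \emph{full} branch onto $Z$ (not merely one whose image strictly contains or is strictly contained in $Z$), and that a uniform version of \ref{hyp3} holds for the remainders $R_k$ uniformly in $k$. This is precisely where the two clauses of the nice-boundary definition split duty: $\leb(\partial_\ve Z)\le C_Z\ve$ controls the new boundary created by excising seed branches, while the inequality $\leb(\partial_\ve(I\setminus\cl Z)\setminus\partial_\ve I)\le C_Z\leb(\partial_\ve I)$ controls the iterated excisions throughout the inductive construction.
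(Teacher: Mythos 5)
Your overall architecture (seed the $K$ full branches $h_{n_j}(Z)$ at the fully recurrent times to force the $\gcd$, then handle the remainder by the machinery of \Cref{thm:GM}) matches the paper's strategy, but your inductive step on the remainder contains a genuine gap. You propose that at each stage one identifies, inside the current remainder $R_{k-1}$, pieces whose image under $T^k$ is \emph{exactly} $Z$, and you assert that ``each step extracts a definite fraction of $\leb(R_{k-1})$.'' Nothing in hypotheses \ref{hyp1}--\ref{hyp3}, nor in the nice-boundary condition, gives you that a definite fraction of an arbitrary remainder returns precisely onto the fixed small set $Z$ within a bounded number of iterates. The quantitative extraction lemma available (\Cref{lem:fixedratio}) only guarantees that a proper standard family has a definite fraction of its weight on $\delta_0$-regular domains, each of which contains \emph{some} element of the partition $\cR$ --- generically not $Z$. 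The paper therefore does not try to return everything directly to $Z$: it stops the remainder on arbitrary elements of $\cR$, producing a Gibbs--Markov map $G=T^\tau$ on all of $\uspace$ with finitely many images (one of which is $Z$, with $G(Z)\supset Z$), and only then passes to the first return map $\tilde G=G^{\vs}$ of $G$ to $Z$. Full-branchedness and the exponential tail of $\tilde\tau=\sum_{\ell<\vs}\tau\circ G^\ell$ then come for free from the finite-image Markov structure of $G$, and the $\gcd$ survives because $\vs=1$ on the seed sets $A_j$. Your route skips this two-stage inducing and leaves part (a) and part (c) unproved for the remainder region.

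Two further points you omit that the paper needs even for the seeding stage. First, the recurrence times must be re-spaced: the paper invokes \Cref{lem:fully_rec} to replace $\{\tilde n_j\}$ by times with $n_1\ge n_{rec}(B)$ and $n_{j+1}-n_j\ge n_{rec}(\bar C_\cR\proper)$, because after each excision of $A_j$ the remainder family is only $(\modreg,\epstail,\bar C_\cR\proper)$--proper (\Cref{lem:remainderfam}) and needs $n_{rec}$ iterates to recover properness before the next stop; without this spacing the boundary constants blow up along the construction. Second, to guarantee that at time $n_j$ there actually is a single standard pair whose domain contains all of $Z$, the artificial chopping in \Cref{iteration} must be performed avoiding $V_*=\cl Z$ (\Cref{lem:divisibility}); otherwise the chopping could cut through $Z$ and destroy the seed branch.
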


\begin{thm}
	\label{thm:fullbranchgcd-one} Suppose $T:\uspace\circlearrowleft$ satisfies
		hypotheses \ref{hyp1}-\ref{hyp3}. In addition, suppose that for
		every $\delta >0$ there exist $Z \subset Z' \subset \uspace$ such that $Z$ has a nice boundary, $\diam Z' \le
		\delta$, $\leb(Z') > \leb(Z)$ and there exists $h \in \cH$ such that $O_h \subset Z$ and $TO_h \supset Z'$. Then,
		there exist $\delta'$ such that $\forall \delta \le \delta'$ there exist a refinement $\cP''$ of the partition $\cP$ into open
		sets (mod $0$) such that $Z=Z(\delta)$ is a union of elements of $\cP''$ and
		there exists a map $\tilde\tau: Z \to\bZ^+$ constant on elements
		of $\cP''$ such that
		\begin{enumerate}[label=(\alph*)]
			  \item \label{prop-fullM-fullbr-one} The map $\tilde
			  G=T^{\tilde\tau}:Z\circlearrowleft$ is a full-branched
			  Gibbs-Markov map.
			\item \label{prop-fullM-gcd-one} $\leb(O_h \cap \{\tilde
			\tau=1\}) > 0$.
			\item \label{prop-fullM-tails-one}$\leb(\tilde \tau>n)\le
			const\cdot\tilde \kappa^n$for some $\tilde \kappa \in (0,1)$.
		\end{enumerate}
	\end{thm}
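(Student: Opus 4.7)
The plan is to adapt the inducing scheme construction of Theorem~\ref{thm:fullbranch} to the new hypothesis, where the ``fully recurrent'' assumption with multiple times $\{n_j\}_{j=1}^K$ is replaced by a single reference branch $h$ with $O_h \subset Z$, $TO_h \supset Z'$, and $\leb(Z') > \leb(Z)$. The construction underlying Theorem~\ref{thm:fullbranch} is a Cantor-type carving procedure over the reference set $Z$: at every stage one identifies subsets of $Z$ whose forward iterates yield full images of $Z$, removes them as cells of the induced map, and iterates on what remains, with the multiple reference times ensuring both $\gcd=1$ and a uniform-in-stage lower bound on the fraction of measure that is carved out.

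First I would use $h(Z) \subset O_h$ as the ``seed'' cell at return time one. Since $TO_h \supset Z' \supset Z$ and $T|_{O_h}$ is a homeomorphism, $T(h(Z)) = Z$, so $h(Z)$ is a full-branch cell on which we set $\tilde\tau \equiv 1$. Bounded distortion (Hypothesis~\ref{hyp2}) together with $\leb(Z)>0$ gives $\leb(h(Z))>0$, which yields conclusion~\ref{prop-fullM-gcd-one}. This also automatically forces $\gcd\{n\ge 1:\leb(\{\tilde\tau=n\})>0\}=1$, playing here the role that the coprimality condition on $\{n_j\}$ played in Theorem~\ref{thm:fullbranch}.

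Next, for the full-branched Gibbs-Markov conclusion~\ref{prop-fullM-fullbr-one} and the exponential tail bound~\ref{prop-fullM-tails-one}, I would run the same Cantor-style carving as before, starting from the residual set $Z \setminus h(Z)$ and iterating $T$. The nice-boundary hypothesis on $Z$ enters exactly as in the proof of Theorem~\ref{thm:fullbranch}, controlling the additional boundary created by removing full-branch cells via the inequality $\leb(\partial_\ve(I\setminus \cl Z)\setminus\partial_\ve I) \le C_Z \leb(\partial_\ve I)$. The role previously played by having $K \ge 2$ distinct reference times is here taken over by the strict inequality $\leb(Z') > \leb(Z)$: the extra piece $h(Z' \setminus Z) \subset O_h$ has positive measure, sits inside $O_h \subset Z$, and therefore furnishes a reservoir of mass which, upon further iteration, continues to produce full-branch cells and keeps the per-stage contraction ratio bounded away from $1$. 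Combined with the controlled-complexity bound~\eqref{eq:dyncomplexity} and uniform expansion from Hypothesis~\ref{hyp1}, this yields geometric decay of $\leb(\{\tilde\tau > n\})$.

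The main obstacle will be establishing, with just a single reference branch, a uniform-in-stage lower bound on the fraction of residual measure absorbed at each Cantor step. In the multi-branch setting of Theorem~\ref{thm:fullbranch} this bound follows rather transparently from the presence of several targets; here I expect to have to quantitatively propagate the surplus $\leb(Z' \setminus Z) > 0$ through repeated applications of $h$ and of the complexity estimate, and this is also the step that will dictate the threshold $\delta'$ below which $\delta$ must be taken, via the requirement $\epstail \ge \delta$ in~\eqref{eq:regepsdef} and the bound $0 \le \sigtail < \expan^{-\ntail}-1$ in Hypothesis~\ref{hyp3}.
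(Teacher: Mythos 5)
Your overall architecture is the right one --- plant a full-branch cell $h(Z)$ with $\tilde\tau=1$ at the very first iterate to get \cref{prop-fullM-gcd-one}, then fall back on the carving scheme of \Cref{thm:fullbranch} for the tails and full-branchedness --- but you have misidentified both the actual difficulty and the role of the hypothesis $\leb(Z')>\leb(Z)$. The difficulty is not a ``uniform-in-stage lower bound on the fraction of residual measure absorbed at each Cantor step'': after the first stop, the construction reverts verbatim to the generic scheme (stopping on elements of the partition $\cR$ at the fixed rate $t$ of \Cref{lem:fixedratio}, then taking the first return of $G=T^\tau$ to $Z$), and that per-stage rate has nothing to do with $Z'$ or with reusing the branch $h$. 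The difficulty is concentrated entirely at stage one: the initial family $\cG_0=\{(Z,\Id_Z/\leb(Z))\}$ is only $(\modreg,\epstail,B)$--proper for some uncontrolled $B$, and you cannot first iterate $n_{rec}(B)$ times to recover $\proper$--properness (as in \Cref{thm:GM}) because that would force $\tau>1$ everywhere and destroy \cref{prop-fullM-gcd-one}. So the cell $A_1=h(Z)\cap Z$ must be removed from $\cT\cG_0$ \emph{prematurely}, and one must then prove that the remainder family is still a proper standard family with some finite constant $B'$, so that \Cref{prop:bd_invariance} can subsequently be applied to it. This is exactly what \Cref{lem:remainderfam-one} supplies, and it is the only place where $\leb(Z')>\leb(Z)$ enters: because (by taking $V_*=\cl Z'$ in the artificial chopping of \Cref{lem:divisibility}) the standard pair of $\cT\cG_0$ being cut has domain $I\supset Z'$, the leftover satisfies $\leb(I\setminus Z)\ge\leb(Z'\setminus Z)>0$, which bounds $\abs{\hat\cG_1}$ from below and keeps the boundary-to-mass ratio of the remainder finite. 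Without this, the remainder could carry arbitrarily little mass and its properness constant would blow up.

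Your proposed mechanism --- propagating the surplus $h(Z'\setminus Z)$ ``through repeated applications of $h$'' so as to keep producing full-branch cells and control a per-stage contraction ratio --- is therefore both unnecessary and beside the point: the real issue is the regularity (properness) of the remainder family after a premature cut, not the amount of mass carved out per stage. Once that step is replaced by the remainder estimate above, the rest of your outline (recovery via \Cref{prop:bd_invariance}, stopping on elements of $\cR$ via \Cref{lem:fixedratio} and \Cref{lem:remainderfam} to get exponential tails, and passing to the first return map of $G$ to $Z$ for full-branchedness) coincides with the paper's proof.
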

 
\section{Proof of \Cref{thm:GM} and \Cref{cor:fullGM}}
\label{sec:proof1}
Note that $\modreg, \epstail$ and $\proper, \delta_0$ are constants
that depend on the map and are fixed once and for all once $T$ is fixed. $\modreg,
\epstail$ are defined in \eqref{eq:regepsdef} and $\proper, \delta_0$ are
defined in \Cref{sec:toolbox} in \Cref{prop:bd_invariance} and \Cref{rem:delzero}.
\begin{proof}[Proof of \Cref{thm:GM}] The following steps lead to our
sought after inducing scheme.

\begin{enumerate} [label=(\arabic*), leftmargin=*] \item Fix
$\delta=\delta_0$ and consider the partition $\cR$ of $\uspace$ given
by \Cref{lem:partition1}. Let us focus on defining the inducing scheme
on one element of this partition. The same can be done for all other
partition elements and in a uniform way because $\cR$ is finite. Fix
$R \in \cR$ and let $\cG_{0}=\{(R,
\Id_{R}/\leb(R))\}$ and $w_{0}=\leb(R)>0$. Due to \cref{Ritemone} of
\Cref{lem:partition1}, the singleton family $\cG_{0}$ with associated
weight $\{w_{0}\}$ is an $(\modreg, \epstail, B)$--proper standard
family (\Cref{std_family}) for some constant $B >0$ possibly larger than $\proper$. \item \label{recstep} By
\Cref{prop:bd_invariance}, $\cG_{1}:=\cT^{n_{rec}(B)} \cG_{0}$ is an
$(\modreg, \epstail,
\proper)$--proper standard family. \item \label{repeatone} By
\cref{Ritemtwothree} of \Cref{lem:partition1}, every standard pair in
$\cG_{1}$ whose domain is $\delta_{0}$-regular contains at least one
element $R'$ from the collection $\cR$. ``Stop'' each
$\delta_0$-regular standard pair of $\cG_{1}$ on its corresponding
rectangle $R'\in \cR$. By stopping we mean going back to $R$ and
defining the return time $\tau = n_{rec}(B)$ on the subset of $R$ that
maps onto $R'$ under $\cT^{n_{rec}(B)}$. By \Cref{lem:fixedratio},
applied to $\cG_1$, the ratio of the removed weight from $\cG_{1}$ to
the weight of the remainder family (defined in
\Cref{lem:remainderfam}), which we denote by $\hat \cG_{1}$, is at
least some positive constant $t$ given by \eqref{eq:t}. Since the
weight of standard pairs is preserved under iteration, this
corresponds to defining $\tau$ on a subset $A \subset R$ such that
$\leb(A) \ge t \cdot \leb(R \setminus A)$. Also, by
\Cref{lem:remainderfam}, $\hat \cG_{1}$ is an $(\modreg, \epstail,
\bar C_{\cR} \proper)$--standard family. \item \label{repeattwo} Just
as in step \ref{recstep}, $\cG_{2}:=\cT^{n_{rec}(\bar
C_{\cR}\proper)}\hat \cG_{1}$ is an $(\modreg, \epstail,
\proper)$--proper standard family so we can apply step \ref{repeatone}
to it. \item Repeat the steps \ref{repeatone}, \ref{repeattwo}
$\rightarrow$ \ref{repeatone}, \ref{repeattwo} $\rightarrow \cdots$,
incrementing the indices accordingly during the process.
\end{enumerate} Applying the above inductive procedure, we will get a
 ``stopping time'' (or return time) $\tau: \uspace \to \bN$ defined on
 a (mod $0$)-partition $\cP'$ of $\uspace$. $\cP'$ is a refinement of
 the partition $\cP$ and $\tau$ is constant on each element of $\cP'$.
 The return time $\tau$ will have exponential tails because at each
 step (where the time between steps is universally bounded by
 $n_{rec}(B)+n_{rec}(\bar C_{\cR}\proper)$) it is defined on a set $A
 \subset X$, where $\leb(A) \ge t \cdot \leb(X\setminus A)$. By
 construction the induced map has finitely many images which form a
 sub-collection of $\cR$. Note that distortion bound is always
 maintained under iterations of $T$ by assumptions \ref{hyp1} and
 \ref{hyp2} so we need not worry about it.
\end{proof}
 
\begin{proof}[Proof of \Cref{cor:fullGM}] Let $Z$ be one of the
finitely many images of $G$. Suppose $Z$ is minimal in the sense that
no proper subsets of $Z$ is an image of $G$. Let $\vs:Z \to \bN$ be
the first return time of $G$ to $Z$ and $\tilde G = G^{\vs}: Z
\circlearrowleft$ be the associated first return map. Let $\cP''$ be
the partition of $\tilde G$, which is a refinement of $\cP'$ and hence
of $\cP$. Since $(T, \leb)$ is ergodic, so is $(G, \leb)$ hence $\vs$
is well defined. Since $G$ is Markov and $Z$ is minimal, $\tilde G$ is
full-branched. Define $\tilde \tau = \sum_{\ell=0}^{\vs-1} \tau \circ
G^{\ell} : Z \to \bN$, then $\tilde G = T^{\tilde \tau}$. Since $G$ is
a Markov map with finitely many images, $\vs$ has exponential tails
and therefore $\tilde \tau:\uspace \to \bN$ also has exponential
tails.
\end{proof}
  
\section{Proof of \Cref{thm:fullbranch}}
\label{sec:proof2}
\begin{proof}[Proof of \Cref{thm:fullbranch}]
  We follow a line of reasoning similar to that of the proof of
	\Cref{thm:GM}, but with some modifications when dealing with $R=Z$
	mainly in order to achieve \cref{prop-fullM-gcd} of
	\Cref{thm:fullbranch}.

	\begin{enumerate}[label=(\arabic*), leftmargin=*]
	\item Fix $\delta = \delta_0$ and $c=1/(2^{\dimn+2}V^{\dimn}\sqrt{\dimn})$.  Let $Z=Z(c\delta)$ be as in the hypothesis of \Cref{thm:fullbranch}. Note that $Z$ is also contained in a $\dimn$-dimensional cube of side length $c\delta$. Let $\cR=\cR(\delta)$ be the partition given by \Cref{lem:partition1}. Let $\cG_{0}= \{(Z, \Id_{Z}/\leb(Z))\}$ and $w_{0}=\leb(Z)$.
		$\cG_{0}$ is an $(\modreg, \epstail, B)$--proper standard family
		for some $B>0$. Applying \Cref{lem:fully_rec} with $C_1=n_{rec}(B)$ and
		$C_2=n_{rec}(\bar C_{\cR}\proper)$, it follows that  $Z$ is fully
		recurrent at times $\{n_j\}_{j=1}^K$, where
		\begin{equation*} n_{1}\ge n_{rec}(B) \text{ and } n_{j+1}-n_{j}
			\ge n_{rec}(\bar C_{\cR}\proper),\ \forall j \in \{1,\dots,
			K-1\}.
		\end{equation*}
	\item Let $\cG_{1}:=\cT^{n_{1}}\cG_{0}$, taking $V_{*}= \cl Z$ as
		the set to avoid under $\cT^{n_{1}}$ under artificial chopping.
		This can be done due to \Cref{lem:divisibility}. Since $n_{1} \ge
		n_{rec}(B)$, $\cG_{1}$ is an $(\modreg, \epstail,
		\proper)$--proper standard family.
		\begin{enumerate}[label=(\arabic{enumi}.\arabic*), leftmargin=*]
		\item There exists a standard pair in $\cG_1$ whose domain
				contains $Z$. ``Stop'' it on $Z$. That is, define
				$\tau = n_{1}$ on
				\begin{equation*} A_{1}:=h_{n_1}(Z)\cap Z = \{x \in
					O_{h_{n_1}} \cap Z : T^{n_1}x \in Z\}.
				\end{equation*}  Note that $T^{n_1}A_1 = Z$. By
				\Cref{lem:remainderfam}, the remainder from $\cG_{1}$,
				which we denote by $\hat
				\cG_{1}$ is an $(\modreg, \epstail, \bar
				C_{\cR}\proper)$--proper standard family. Let
				$\cG_{2}=\cT^{n_{2}-n_{1}}\hat\cG_{1}$. Since
				$n_{2}-n_{1} \ge n_{rec}(\bar C_{\cR}\proper)$,
				$\cG_{2}$ is an $(\modreg,
				\epstail, \proper)$--proper standard family.
		\item As before, define $\tau = n_{2}$ on $A_2 := h_{n_2}(Z) \cap Z$. Note that $T^{n_2}A_2 = Z$ and $A_2$ is disjoint from $A_1$ because $O_{h_{n_2}}$ is disjoint from
				$O_{h_{n_1}}$. By
				\Cref{lem:remainderfam}, the remainder from
				$\cG_{2}$, which we denote by $\hat \cG_{2}$ is an
				$(\modreg, \epstail, \bar C_{\cR}\proper)$--proper
				standard family. Let
				$\cG_{3}=\cT^{n_{3}-n_{2}}\hat\cG_{2}$. Since
				$n_{3}-n_{2} \ge n_{rec}(\bar C_{\cR}\proper)$,
				$\cG_{3}$ is an $(\modreg, \epstail, \proper)$--proper
				standard family.
		\item We continue this process until we define $\tau = n_{K}$
			on $$A_{K}:=h_{n_K}(Z) \cap Z,$$ which is
			disjoint from previous $A_j$'s.

			Let $\hat \cG_{K}$ be the remainder from $\cG_{K}$. Note
			that $\hat \cG_{K}$ is an $(\modreg, \epstail,
			\bar C_{\cR}\proper)$--proper standard family. Also note
			that $\forall j \in \{1,\dots, K\}$, $A_{j} \subset Z$ and
			$T^{n_{j}}A_{j}=Z$. Moreover, $\leb(A_{j})>0$ because
			$\forall j
			\in \{1,\dots, K\}$ the inverse branches of $T^{n_{j}}$
			are non-singular, there are at most countably many such
			branches and $\leb(Z)>0$.
		\end{enumerate}
	\item We have achieved that
		\begin{equation*}
			\gcd\left\{n: \leb\left(\{\tau=n\} \cap
					\bigcup_{j=1}^{K}A_{j}\right) > 0\right\} =1.
		\end{equation*} Also, by construction, $T^\tau$ maps each
		$A_j$ onto $Z$ in a one-to-one fashion.

		We continue the construction of $\tau$ on the rest of $Z$,
		i.e. on $\hat Z := Z \setminus \bigcup_{j=1}^{K} A_{j}$, in
		such a way that it has exponential tails. We will do so by
		continuing to iterate $\hat \cG_{K}$.
	\item \label{Zrepeatone} Let $\cG_{K+1} = \cT^{n_{rec}(\bar
		C_{\cR}\proper)}\hat
		\cG_{K}$. Then $\cG_{K+1}$ is an $(\modreg, \epstail,
		\proper)$--proper standard family. By \cref{Ritemtwothree} of
		\Cref{lem:partition1}, every standard pair in $\cG_{K+1}$
		whose domain is $\delta_{0}$--regular contains an element
		$R_{k}$, $1
		\le k \le N$, from the collection $\cR$. ``Stop'' such
		standard pairs of $\cG_{K+1}$ on $R_{k} \in \cR$. By stopping
		we mean going back to $\hat Z \subset Z$ and defining the
		return time $\tau = n_{K}+n_{rec}(\bar C_{\cR}\proper)$ on the
		subset of $\hat Z$ that maps onto $R_{k}$ under
		$T^{n_{K}+n_{rec}(\bar C_{\cR}\proper)}$. By
		\Cref{lem:fixedratio}, the ratio of the removed weight from
		$\cG_{K+1}$ to the weight of the remainder family, which we
		denote by $\hat \cG_{K+1}$, is at least some positive constant
		$t$ given by \eqref{eq:t}. Note that since the total weight is
		preserved under iteration, this corresponds to defining $\tau$
		on a subset $A \subset \hat Z$ such that $\leb(A) \ge t \cdot
		\leb(\hat Z
		\setminus A)$. Also, by \Cref{lem:remainderfam},
		$\hat\cG_{K+1}$ is an $(\modreg, \epstail, \bar C_{\cR}
		\proper)$--standard family.

	\item \label{Zrepeattwo} $\cG_{K+2}:=\cT^{n_{rec}(\bar
		C_{\cR}\proper)}\hat \cG_{K+1}$ is an $(\modreg, \epstail,
		\proper)$--proper standard family so we can apply step
		\ref{Zrepeatone} to it.
	\item Repeat the steps \ref{Zrepeatone}, \ref{Zrepeattwo}
		$\rightarrow$ \ref{Zrepeatone}, \ref{Zrepeattwo} $\rightarrow
		\cdots$, incrementing the indices accordingly during the
		process. This procedure defines $\tau$ on $\hat Z$ up to a
		measure zero set of points (which includes points that map
		into $\partial Z$).
	\end{enumerate}

	The above steps described how to define $\tau$ on $Z$. We have
	also explained how to define $\tau$ on the rest of the elements of
	$\cR$ in the proof of \Cref{thm:GM}. Putting these together we get
	the same statement as \Cref{thm:GM}, but with the additional
	properties that $\gcd\{n:
	\leb(\{\tau=n\} > 0\} =1$, $Z$ is one of the finitely many images
	of $G=T^{\tau}$ and that $G(Z) \supset Z$.

	Let $\vs:Z \to \bN$ be the first return time of $G$ to $Z$ and
	$\tilde G = G^{\vs}: Z \circlearrowleft$ be the associated first
	return map. Since $GA_{j} =T^{\tau}A_{j}=T^{n_{j}}A_{j}=Z$,
	$\forall j \in \{1, \dots, K\}$, it follows that $\vs = 1$ on the
	set $\bigcup_{j=1}^{K} A_{j}$.

	Define $\tilde \tau = \sum_{\ell=0}^{\vs-1} \tau \circ G^{\ell} :
	Z
	\to
	\bN$, then $\tilde G = T^{\tilde \tau}$. It follows from the
	previous paragraph that $\tilde \tau = \tau$ on $\bigcup_{j=1}^{K}
	A_{j} \subset Z$. This implies \cref{prop-fullM-gcd}.
	\Cref{prop-fullM-fullbr} and \cref{prop-fullM-tails} simply follow
	from the fact that $G$ is a Markov map with finitely many states
	(hence $\vs$ has exponential tails) and $\tau:\uspace \to \bN$ has
	exponential tails.
\end{proof}

\section{Proof of \Cref{thm:fullbranchgcd-one}}
The proof of \Cref{thm:fullbranchgcd-one} proceeds similarly to the proof of \Cref{thm:fullbranch} except that in the initial step we need to define the stopping time pre-maturely because the initial family is not an $(\modreg, \epstail, \proper)$--standard family and we cannot iterate to make it so. The remedy is to use \Cref{lem:remainderfam-one} where we had previously used \Cref{lem:remainderfam}.

\begin{proof}
\begin{enumerate}[label=(\arabic*), leftmargin=*]
	\item Fix $\delta = \delta_0$ and $c=1/(2^{\dimn+2}V^{\dimn}\sqrt{\dimn})$.  Let $Z=Z(c\delta)$ be as in the hypothesis of \Cref{thm:fullbranchgcd-one}. Note that $Z$ is also contained in a $\dimn$-dimensional cube of side length $c\delta$. Let $\cR=\cR(\delta)$ be the partition given by \Cref{lem:partition1}. Let $\cG_{0}= \{(Z, \Id_{Z}/\leb(Z))\}$ and $w_{0}=\leb(Z)$.
		$\cG_{0}$ is an $(\modreg, \epstail, B)$--proper standard family
		for some $B>0$. 
	\item Let $\cG_{1}:=\cT\cG_{0}$, taking $V_{*}= \cl Z'$ as
		the set to avoid under $\cT$ under artificial chopping.
		This can be done due to \Cref{lem:divisibility}. Let $h$ be as in the hypotheses of \Cref{thm:fullbranchgcd-one}. Define $\tau = 1$ on
		\begin{equation*} A_{1}:=h(Z)\cap Z = \{x \in
			O_{h} \cap Z : Tx \in Z\}.
		\end{equation*}  Note that $TA_1 = Z$. By
		\Cref{lem:remainderfam-one}, the remainder from $\cG_{1}$,
		which we denote by $\hat \cG_{1}$ is an $(\modreg, \epstail, B')$--proper standard family for some $B'>0$. Let $\hat Z := Z \setminus A_{1}$
	\item \label{recstep-one} let
		$\cG_{2}:=\cT^{n_{rec}(B')} \cG_{0}$. Then $\cG_2$ is an
		$(\modreg, \epstail,
		\proper)$--proper standard family. 
	\item \label{repeatone-one} By
		\cref{Ritemtwothree} of \Cref{lem:partition1}, every standard pair in
		$\cG_{2}$ whose domain is $\delta_{0}$-regular contains at least one
		element $R'$ from the collection $\cR$. ``Stop'' each
		$\delta_0$-regular standard pair of $\cG_{2}$ on its corresponding
		rectangle $R'\in \cR$. By stopping we mean going back to $Z$ and
		defining the return time $\tau = 1 + n_{rec}(B')$ on the subset of $R$ that
		maps onto $R'$ under $\cT^{1+n_{rec}(B')}$. By \Cref{lem:fixedratio},
		applied to $\cG_2$, the ratio of the removed weight from $\cG_{2}$ to
		the weight of the remainder family (defined in
		\Cref{lem:remainderfam}), which we denote by $\hat \cG_{2}$, is at
		least some positive constant $t$ given by \eqref{eq:t}. Since the
		weight of standard pairs is preserved under iteration, this
		corresponds to defining $\tau$ on a subset $A \subset R$ such that
		$\leb(A) \ge t \cdot \leb(R \setminus A)$. Also, by
		\Cref{lem:remainderfam}, $\hat \cG_{2}$ is an $(\modreg, \epstail,
		\bar C_{\cR} \proper)$--standard family. 
	\item \label{repeattwo-one} Just
		as in step \ref{recstep-one}, $\cG_{3}:=\cT^{n_{rec}(\bar
		C_{\cR}\proper)}\hat \cG_{2}$ is an $(\modreg, \epstail,
		\proper)$--proper standard family so we can apply step \ref{repeatone-one}
		to it. 
		\item Repeat the steps \ref{repeatone-one}, \ref{repeattwo-one}
		$\rightarrow$ \ref{repeatone-one}, \ref{repeattwo-one} $\rightarrow \cdots$,
		incrementing the indices accordingly during the process.
	\end{enumerate}

	The above steps described how to define $\tau$ on $Z$ so that $\leb(O_h \cap \{\tilde
	\tau=1\}) > 0$. We have
	also explained how to define $\tau$ on the rest of the elements of
	$\cR$ in the proof of \Cref{thm:GM}. The rest of the proof is the same as the proof of \Cref{thm:fullbranch}.
\end{proof}
 
\section{Supplementary lemmas}
\label{sec:inducing}
This section contains supplementary lemmas for the proofs of our main theorems. The first lemma is taken from \cite{BT08} and stated in a form
that is suitable for our needs.
\begin{lem}[Sublemma C.1 of \cite{BT08}] \label{BT} Suppose $I$ is a
non-empty measurable bounded subset of $\bR^{\dimn}$ and $E$ is a
hyperplane cutting $I$ into left and right parts $I_{l}$ and $I_{r}$.
Then $\forall \ve \ge 0$ and $0 \le \xi \le 1$, we have
\begin{equation}
 \label{eq:BT}
\begin{split}
\leb(\{x \in I_{l}: \metr(x, E)\leq \xi \ve\}\setminus
\{x \in I: \metr(x, \partial I) \leq \ve\}) &\leq \\ & \hspace{-3cm}
\xi \leb(\{x \in I_{r}: \metr(x, \partial I)\le \ve\}).
\end{split}
\end{equation}
\end{lem}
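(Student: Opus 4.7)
The plan is to reduce the $\dimn$-dimensional inequality to a one-dimensional analogue via a Fubini argument along lines perpendicular to $E$, and then dispatch the 1D statement by decomposing the sliced set into its connected components. First, choose coordinates so that $E = \{x_1 = 0\}$ and $I_l = I \cap \{x_1 < 0\}$, and for $z \in \bR^{\dimn - 1}$ set $L_z = \{(t, z) : t \in \bR\}$ and $J_z = \{t \in \bR : (t, z) \in I\}$. A short topological observation yields the inclusion $\partial J_z \subset \partial I \cap L_z$: any $1$-dimensional boundary point of $J_z$ is approached from both sides along $L_z$, hence from both sides in $\bR^{\dimn}$. From this inclusion, for $t \in J_z$ one deduces (i) $\metr((t,z), \partial I) > \ve$ implies $\metr(t, \partial J_z) > \ve$, and (ii) $\metr(t, \partial J_z) \le \ve$ implies $\metr((t,z), \partial I) \le \ve$. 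Assuming without loss of generality that $I$ is open so that slices are open, applying Fubini reduces the lemma to proving, for every bounded open $J \subset \bR$,
\[
\leb(L_J) \le \xi \leb(R_J), \qquad L_J := \{t \in J : -\xi\ve \le t < 0,\, \metr(t, \partial J) > \ve\}, \quad R_J := \{t \in J : t > 0,\, \metr(t, \partial J) \le \ve\}.
\]

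For the 1D statement, decompose $J = \bigsqcup_i (a_i, b_i)$ into its open connected components. If $t \in L_J$, the open interval $(t - \ve, t + \ve)$ is disjoint from $\partial J$ and contains $t$, so it lies in the single component $(a_i, b_i)$ containing $t$; combined with $t \in [-\xi\ve, 0)$ and $\xi \le 1$, this forces $a_i < 0 < b_i$, i.e., the component must straddle $0$. Since at most one component can straddle $0$, call it $(a, b)$, and $L_J \subset (a, b)$. As $\{a, b\} \subset \partial J$, replacing $\partial J$ by $\{a, b\}$ produces a superset of $L_J$ and a subset of $R_J$; hence it suffices to prove the single-interval case, with $J = (a, b)$.

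For $J = (a, b)$ with $a < 0 < b$, elementary descriptions give $L_J = [\max(-\xi\ve, a + \ve), \min(0, b - \ve))$, while $R_J$ is the union of $\{t \in (0, b) : t \le a + \ve\}$ and $\{t \in (0, b) : t \ge b - \ve\}$. A short case analysis on whether $a \le -(1 + \xi)\ve$ or not, and whether $b \ge \ve$ or not, verifies $\leb(L_J) \le \xi \leb(R_J)$, with equality attained, e.g., when $a \le -(1 + \xi)\ve$ and $b \ge \ve$ (both sides equal $\xi\ve$).

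The step I expect to be the main obstacle is the clean verification of the Fubini reduction---in particular, the inclusion $\partial J_z \subset \partial I \cap L_z$ and the two implications (i), (ii) that convert between the $\dimn$-dimensional distance to $\partial I$ and the 1D distance to $\partial J_z$. Once these are in place, the remaining steps are elementary 1D bookkeeping.
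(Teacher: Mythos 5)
The paper does not actually prove this lemma: it is imported verbatim as Sublemma~C.1 of \cite{BT08}, so there is no internal proof to compare against. Your argument is a correct, self-contained proof, and it follows the natural route (slice perpendicular to $E$, reduce by Fubini to a one-dimensional statement, then reduce to a single interval). The key facts you rely on all check out: $\partial J_z\subset\partial I\cap L_z$ gives both implications (i) and (ii) because $\partial J_z$ is a \emph{subset} of $\partial I$ lying on the line $L_z$, so the 1D distance to $\partial J_z$ dominates the $\dimn$-dimensional distance to $\partial I$ and is dominated by nothing smaller; the component containing a point of $L_J$ must straddle $0$ because $(t-\ve,t+\ve)\subset(a_i,b_i)$, $t<0$ and $t\ge-\xi\ve\ge-\ve$ force $a_i<t-\ve<0<t+\ve<b_i$ (for $\ve>0$; the case $\ve=0$ is trivial since the left-hand set then lies in the hyperplane $E$); replacing $\partial J$ by $\{a,b\}$ enlarges $L_J$ and shrinks $R_J$ in the right directions; and the four-case interval computation is correct, with the sharp case $a\le-(1+\xi)\ve$, $b\ge\ve$ giving equality $\xi\ve=\xi\cdot\ve$.

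The one assertion you should not leave as a bare ``without loss of generality'' is the reduction to open $I$, since the lemma is stated for measurable $I$. It is true, but it needs a line of justification: if $x$ belongs to the left-hand set then $\ball(x,\ve)\cap\partial I=\emptyset$ and $x\in\cl I$, so by connectedness $\ball(x,\ve)\subset \operatorname{int}I$; together with $\partial(\operatorname{int}I)\subset\partial I$ this shows the left-hand set for $I$ is contained in the left-hand set for $\operatorname{int}I$, while $\operatorname{int}I\subset I$ and $\metr(\cdot,\partial I)\le\metr(\cdot,\partial(\operatorname{int}I))$ show the right-hand set for $\operatorname{int}I$ is contained in that for $I$. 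With that paragraph added, the proof is complete.
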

 
\begin{lem}
\label{lem:partition1} There exists a constant $c>0$ such that for every $\delta >0$ and every open $Z \subset X$ that has a nice boundary and is contained in a $d$-dimensional cube of side-length $c\delta$, there
exist a finite (mod $0$)-partition $\cR=\{R_{j}\}_{j=1}^{N}$ of
$\uspace$ into open sets such that
\begin{enumerate} [label=(\arabic*)] 
\item $Z \in \cR$.
\item \label{Ritemone} for every
$1 \le j \le N$, $\sup_{\ve>0} \ve^{-1}\leb(\partial_{\ve}R_{j})
<\infty$, \item \label{Ritemtwothree} for every $\delta$--regular set
$I$, there exists $R \in \cR$ s.t. $I \supset
R$ and
\begin{eqnarray} \label{eq:621} \leb(I \setminus R)&\ge&
(1/2)\leb(I);\\ \label{eq:622}\leb(\partial_{\ve}(I \setminus \cl
R)\setminus\partial_{\ve}I) &\le& \max\{2\dimn, C_Z\} \leb(\partial_{\ve}I).
\end{eqnarray}
\end{enumerate}
 \end{lem}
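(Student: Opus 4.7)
The plan is to construct $\cR$ as a regular grid of small cubes modified so that $Z$ appears as one element. Fix the constant $c = 1/(2^{\dimn+2}V^{\dimn}\sqrt{\dimn})$ that appears in the subsequent main proofs. Because $Z$ lies inside a cube of side length $c\delta$, call it $E_Z$, I would tile $\bR^{\dimn}$ by the translates $E_Z + c\delta\bZ^{\dimn}$, so that $E_Z$ itself becomes one tile $Q_0$; intersect the tiling with $\uspace$ and replace $Q_0 \cap \uspace$ by the two sets $Z$ and $(Q_0 \cap \uspace) \setminus \cl Z$. The resulting family $\cR$ is a finite (by boundedness of $\uspace$ and \eqref{eq:bdofspace}) mod-$0$ partition of $\uspace$ into open sets, with $Z \in \cR$. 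Each grid cell $Q_\alpha \cap \uspace$ has a piecewise-flat boundary, so $\leb(\partial_\ve(Q_\alpha \cap \uspace)) \leq C\ve$ follows from elementary geometry combined with \eqref{eq:bdofspace}; the hybrid piece $(Q_0 \cap \uspace) \setminus \cl Z$ inherits the linear estimate from $\partial Q_0$, $\partial \uspace$, and the nice-boundary hypothesis on $Z$; and $Z$ itself satisfies $\leb(\partial_\ve Z) \leq C_Z \ve$ by hypothesis. This settles items 1 and 2.

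For item 3, let $I$ be any $\delta$-regular set. The regularity forces $I$ to contain an inscribed ball of radius comparable to $\delta$; the quantitative choice of $c$ is small enough that this radius exceeds $c\delta\sqrt{\dimn}$, which in turn forces at least one full grid cube $Q_\alpha$ to lie inside $I$. If $Q_\alpha \neq Q_0$ I take $R = Q_\alpha$; if $Q_\alpha = Q_0$ then $Z \subset Q_0 \subset I$, so I take $R = Z$. In either case $\leb(R) \leq (c\delta)^{\dimn}$, and since $\delta$-regularity yields a lower bound $\leb(I) \geq c_1 \delta^{\dimn}$, the smallness of $c$ produces $\leb(R) \leq \tfrac12 \leb(I)$, i.e.\ \eqref{eq:621}. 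For \eqref{eq:622}, the inclusion $R \subset I$ yields
\begin{equation*}
 \partial_\ve(I \setminus \cl R) \setminus \partial_\ve I \subset \partial_\ve R \setminus \partial_\ve I,
\end{equation*}
and the two cases split cleanly: when $R = Z$ the nice-boundary hypothesis gives the factor $C_Z$ directly, while when $R$ is a cube the Minkowski estimate $\leb(\partial_\ve R) \leq 2\dimn (c\delta)^{\dimn - 1}\ve$ balanced against a $\delta$-regularity lower bound on $\leb(\partial_\ve I)$ produces the factor $2\dimn$. The regime where $\ve$ is large compared with $c\delta$ is handled trivially by $\leb(\partial_\ve R) \leq \leb(R)$ together with a uniform positive lower bound on $\leb(\partial_\ve I)$ valid in that regime.

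The hard part is the quantitative bookkeeping behind the single choice of $c$: the same value must simultaneously guarantee that every $\delta$-regular set contains a full grid cube, that this cube occupies at most half the measure of any such set, and that its Minkowski boundary is controlled by the Minkowski boundary of the set up to the factor $2\dimn$. Each condition is a separate upper bound on $c$ depending on $\dimn$ and the regularity constants; checking routinely that $c = 1/(2^{\dimn+2}V^{\dimn}\sqrt{\dimn})$ accommodates all three is the principal computation. A secondary technical point is to verify that the two-piece split at $Q_0$ preserves both the open-set structure and the mod-$0$ partition property, which is immediate from openness of $Z$ and the finiteness of the grid.
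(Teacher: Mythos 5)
Your construction of $\cR$ (a grid of cubes of side $c\delta$, with $Z$ split off as its own element), your verification of items (1)--(2), and your proof of \eqref{eq:621} (cube of diameter $\sqrt{\dimn}\,c\delta$ inside the inscribed ball of radius $\delta$, measure comparison against the ball) all match the paper's argument. The genuine gap is in \eqref{eq:622} in the case $R$ is a cube. First, a small but real error: with the paper's convention $\partial_\ve R=\{x\in R:\metr(x,\partial R)<\ve\}$ (an \emph{inner} collar), the inclusion $\partial_\ve(I\setminus\cl R)\setminus\partial_\ve I\subset\partial_\ve R\setminus\partial_\ve I$ is false, since the left-hand set lies in $I\setminus\cl R$, i.e.\ outside $R$; you would need the outer $\ve$-neighbourhood of $\partial R$ inside $I$. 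More importantly, your strategy of bounding $\leb(\partial_\ve R)$ from above by a Minkowski estimate and $\leb(\partial_\ve I)$ from below using $\delta$-regularity does not deliver the stated constant. The required lower bound $\leb(\partial_\ve I)\gtrsim \delta^{\dimn-1}\ve$, uniform over all $\delta$-regular open sets $I$ and all $\ve$, is neither stated nor proved (it holds for $\ve\le\delta$ by a Fubini/line-section argument, but that is a separate lemma); even granting it, the resulting ratio is of the form $C(\dimn)\,c^{\dimn-1}/V_{\dimn-1}$, which is not visibly $\le 2\dimn$ for the prescribed $c$; and in the regime $\ve\gg c\delta$ your ``trivial'' bound yields a constant depending on $\delta$ and $\diam\uspace$, not $2\dimn$. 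Since \eqref{eq:622} is asserted with the explicit constant $\max\{2\dimn,C_Z\}$ for \emph{all} $\ve$, this part of the argument does not close.

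The missing idea is Lemma \ref{BT} (Sublemma C.1 of \cite{BT08}), which the paper states immediately before this lemma precisely for this purpose: extend each of the $2\dimn$ faces of the cube $R$ to a hyperplane $E$ cutting $I$ into $I_l$ and $I_r$, and apply \eqref{eq:BT} with $\xi=1$. Each face then contributes at most $\leb(\{x\in I_r:\metr(x,\partial I)\le\ve\})\le\leb(\partial_\ve I)$, for every $\ve\ge 0$, with no lower bound on $\leb(\partial_\ve I)$ and no case split in $\ve$; summing over the $2\dimn$ faces gives exactly the factor $2\dimn$. The case $R=Z$ is handled, as you say, directly by the nice-boundary hypothesis. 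You should replace the Minkowski comparison by this hyperplane-by-hyperplane application of Lemma \ref{BT}.
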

 
\begin{proof} Let $c=1/(2^{\dimn+2}V^{\dimn}\sqrt{\dimn})$, where
$V=\leb(\ball_{1})$ is the volume of the unit ball in $\bR^{\dimn}$,
and let $\cS=\{S_{j}\}$ denote a grid of open cubes in $\bR^{\dimn}$
with sides of length $c\delta$ parallel to the coordinate axes. Since
$\uspace$ is bounded, the collection $\cR=\{Z\} \cup \{S \cap (\uspace\setminus Z): S \in
\cS\}$ forms a finite (mod $0$) partition of $\uspace$ into open sets.
Now, if $R=Z$, then \cref{Ritemone} is satisfied because $Z$ has nice boundary. If $R = S \cap (X\setminus Z)$, then
\begin{equation*}
	\begin{split}
		\leb(\partial_\ve R) &\le \leb(\partial_\ve S) + \leb(\partial_\ve(\uspace \setminus Z)) \\
		&\le \leb(\partial_\ve S) + \leb(\partial_\ve(\uspace \setminus Z)\setminus \partial_\ve \uspace) + \leb(\partial_\ve \uspace). 
	\end{split}
\end{equation*}
By a simple calculation $\le 2\dimn\ve (c\delta)^{\dimn-1}$. Now
\cref{Ritemone} follows because $Z$ has a nice boundary.

Next, suppose $I \subset \uspace$ is a $\delta$-regular set. Then it
contains a ball $\ball(x, \delta)$ of radius $\delta$. Let $R \in \cR$
be the element that contains $x$. Since $\diam R  \le
\sqrt{\dimn}c\delta$,
\begin{equation*} R \subset \ball(x, \sqrt{\dimn}c\delta ) \subset
 \ball(x, \frac12\delta) \subset I.
\end{equation*} Moreover,
\begin{equation*}
 \leb(R)\le(c\delta)^{\dimn}=
 \frac{2^{\dimn}c^{\dimn}}{V^{\dimn}}\leb(\ball(x, \frac{1}{2}\delta))
 <\frac12 \ball(x, \frac12\delta) \le \frac{1}{2}\leb(I).
\end{equation*} It follows that $\leb(I \setminus R)\ge \frac12
 \leb(I)$ verifying \eqref{eq:621}. Note that either $R=Z$ or $R=S$
 for some $S \in \cS$. In the former case \eqref{eq:622} holds because
 $Z$ has a nice boundary. In the case that $R=S$, each of the $2\dimn$
 sides of the cube $R$ can be continued as a hyperplane to cross $I$.
 By \Cref{BT}, the $\ve$-boundary of each side contributes no more
 than the $\ve$-boundary of $I$, verifying \eqref{eq:622}.
\end{proof} Set $c_{\cR}=1/2$ and $C_{\cR}=2\max\{2\dimn, C_Z\}$.

\begin{lem} [Remainder family $\hat \cG$] \label{lem:remainderfam}
Suppose $\cG$ is an $(\modreg, \epstail, \proper)$--proper standard
family. Let $\hat \cG$ be the family obtained from $\cG$ by replacing
each $(I, \rho)$ of weight $w$ having a $\delta_0$-regular domain and containing an element $R=R(I) \in
\cR$ in its domain with $\leb(I\setminus R)\neq 0$, by $(I\setminus
\cl R, \rho \Id_{I
\setminus \cl R}/\int_{I \setminus R} \rho)$ of weight $w \int_{I
\setminus R} \rho$. Then $\hat \cG$ is an $(\modreg, \epstail, \bar
C_{\cR}\proper )$--proper standard family, where $\bar C_{\cR} = (\Ca
C_{\cR}+1) \Ca c_{\cR}^{-1}$. \end{lem}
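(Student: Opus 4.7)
The approach is to verify, for each pair of $\hat\cG$, the three conditions defining a $(\modreg,\epstail,\bar C_\cR\proper)$--proper standard family: pair-wise regularity of the density, preservation of total weight, and the boundary/proper estimate. Any pair of $\cG$ not affected by the construction (non--$\delta_0$-regular domain, or $\leb(I\setminus R)=0$) persists in $\hat\cG$ unchanged, and already satisfies the boundary bound with its original constant $\proper\le\bar C_\cR\proper$, so I focus on a single modified pair $(I\setminus\cl R,\tilde\rho)$ with $\tilde\rho=\rho\Id_{I\setminus\cl R}/\int_{I\setminus R}\rho$ and new weight $w\int_{I\setminus R}\rho$.

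For the density, $\tilde\rho$ is a positive scalar multiple of $\rho|_{I\setminus\cl R}$, so the $(\modreg,\epstail)$--log-Hölder bound satisfied by $\rho$ on $I$ is inherited verbatim on the subdomain. For total mass, the new weight $w\int_{I\setminus R}\rho$ is exactly the mass not transferred to $R$ by the stopping procedure, so conservation is automatic across the family.

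The heart of the argument is the boundary estimate. I would write
\begin{equation*}
\int\tilde\rho\,\Id_{\partial_\ve(I\setminus\cl R)}
=\frac{1}{\int_{I\setminus R}\rho}\left(\int\rho\,\Id_{\partial_\ve(I\setminus\cl R)\cap\partial_\ve I}+\int\rho\,\Id_{\partial_\ve(I\setminus\cl R)\setminus\partial_\ve I}\right),
\end{equation*}
and bound numerator and denominator separately. The first integral in the numerator is dominated by the original properness, giving $\le\proper\,\ve\int_I\rho$. For the second integral, \eqref{eq:622} of \Cref{lem:partition1} yields $\leb(\partial_\ve(I\setminus\cl R)\setminus\partial_\ve I)\le(C_\cR/2)\leb(\partial_\ve I)$, which I would upgrade to a $\rho$-weighted bound using the distortion inequality $\rho\le\Ca\,\bar\rho_I$ (where $\bar\rho_I=\int_I\rho/\leb(I)$), together with the companion Lebesgue version of properness on $I$ that follows from $\rho\ge\Caa\,\bar\rho_I$ plus the hypothesis that $(I,\rho)$ is $\proper$-proper. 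For the denominator, the same lower pointwise bound together with \eqref{eq:621}, $\leb(I\setminus R)\ge c_\cR\leb(I)$, gives $\int_{I\setminus R}\rho\ge\Caa c_\cR\int_I\rho$. Dividing, collecting powers of $\Ca$, $C_\cR$ and $c_\cR^{-1}$, and weighting/summing over the family delivers the claimed constant $\bar C_\cR=(\Ca C_\cR+1)\Ca c_\cR^{-1}$.

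The main obstacle is the careful bookkeeping of the distortion factors $\Ca$: one copy enters when converting the Lebesgue bound on the ``new boundary'' $\partial_\ve(I\setminus\cl R)\setminus\partial_\ve I$ into a $\rho$-weighted bound, and a second copy enters via the reciprocal of the lower bound on $\int_{I\setminus R}\rho$. Ensuring these are the only places where $\Ca$ is accrued — and in particular not re-applying distortion when estimating the $\partial_\ve I$ part of the numerator, where the hypothesis on $(I,\rho)$ is used directly — is what pins the final multiplicative constant to $(\Ca C_\cR+1)\Ca c_\cR^{-1}$ rather than a larger expression. Everything else is bookkeeping.
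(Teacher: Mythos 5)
Your proposal is correct and follows essentially the same route as the paper: split $\partial_\ve(I\setminus\cl R)$ into the part inside $\partial_\ve I$ and the ``new boundary'', control the latter via \eqref{eq:622} upgraded to a $\rho$-weighted bound by the Comparability \Cref{Fed} (one factor of $\Ca$), and compare $\abs{\hat\cG}$ to $\abs{\cG}$ via \eqref{eq:621} and \Cref{Fed} again (the factor $\Ca c_\cR^{-1}$). The only imprecision is your per-pair claim that the $\partial_\ve I$ contribution is $\le\proper\ve\int_I\rho$ --- properness is a family-level condition \eqref{eq:bd_def}, not a pairwise one --- but since you invoke it only after weighting and summing over $j$, where it reads $\abs{\partial_\ve\cG}\le\proper\ve\abs{\cG}$, the argument is sound.
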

 
\begin{proof} This is a consequence of \cref{Ritemtwothree} of
\Cref{lem:partition1}. Indeed, assuming $\cG=\{(I_{j}, \rho_{j})\}$
with associated weights $w_{j}$, we have, $\forall \ve < \epstail$,
\begin{equation*}
\begin{split} \abs{\partial_{\ve}\hat \cG} &\le \sum_{j} w_{j}
\int_{\partial_{\ve}(I_{j}\setminus \cl R)}\rho_{j} \le \sum_{j} w_{j}
\left( \int_{\partial_{\ve}(I_{j}\setminus \cl R)\setminus
\partial_{\ve}I_{j}}\rho_{j}
+\int_{\partial_{\ve}I_{j}}\rho_{j}\right) \\ &\le \sum_{j}
w_{j}\left( \Ca \frac{\leb(\partial_{\ve}(I_{j}\setminus \cl
R)\setminus \partial_{\ve}I_{j})}{\leb(\partial_{\ve}I_{j})}
\int_{\partial_{\ve}I_{j}}\rho_{j}
+\int_{\partial_{\ve}I_{j}}\rho_{j}\right)\\ &\le (\Ca
C_{\cR}+1)\abs{\partial_{\ve}\cG}, \end{split}
\end{equation*} where in the second line we have used the
 Comparability \Cref{Fed} and in the last line we have used
 \eqref{eq:622}. Since $\cG$ is $\proper$--proper,
 $\abs{\partial_{\ve}\cG} \le \proper \ve \abs{\cG}$; moreover
 \eqref{eq:621} can be used to show that $\abs{\cG} \le \Ca
 c_{\cR}^{-1} \abs{\hat\cG}$. Indeed, by \Cref{Fed},
\begin{equation*}
 \abs{\hat\cG} \ge \sum_{j}w_{j}\int_{I_{j}\setminus R} \rho_{j} \ge
 \sum_{j} w_{j} \Caa \frac{\leb(I_{j}\setminus
 R)}{\leb(I_{j})}\int_{I_{j}}\rho_{j} \ge \Caa c_{\cR}\abs{\cG}.
\end{equation*} It follows that $\abs{\partial_{\ve}\hat \cG} \le \bar
 C_{\cR} \proper \ve \abs{\hat \cG}$.
\end{proof}

\begin{lem} [Remainder family $\hat \cG$ in the presence of $Z$] \label{lem:remainderfam-one}
	Suppose $\cG$ is an $(\modreg, \epstail, B)$--proper standard
	family. Suppose $Z \subset Z' \subset \uspace$, $Z$ has a nice boundary and $\leb(Z') > \leb(Z)$. Let $\hat \cG$ be the family obtained from $\cG$ by replacing
	each $(I, \rho)$ of weight $w$ containing $Z'$ in its domain, by $(I\setminus
	\cl Z, \rho \Id_{I
	\setminus \cl Z}/\int_{I \setminus Z} \rho)$ of weight $w \int_{I
	\setminus Z} \rho$. Then $\hat \cG$ is an $(\modreg, \epstail,B' )$--proper standard family, for some constat $B'>0$. \end{lem}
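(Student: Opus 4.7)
The plan is to mirror the proof of \Cref{lem:remainderfam} almost line for line, with the rectangle $R$ replaced by $Z$ and with the role of the partition-lemma estimates \eqref{eq:621}--\eqref{eq:622} played by the nice-boundary property of $Z$ together with the strict inequality $\leb(Z') > \leb(Z)$. Write $\cG = \{(I_j, \rho_j)\}$ with weights $w_j$, and split the index set into those $j$ for which $I_j \supset Z'$ (the pairs that get modified) and those for which $I_j \not\supset Z'$ (left untouched). The untouched pairs contribute identically to $\abs{\partial_\ve \cG}$, $\abs{\partial_\ve \hat\cG}$, $\abs{\cG}$ and $\abs{\hat\cG}$, so it suffices to estimate everything on the modified pairs and then recombine.

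For the boundary estimate, I follow exactly the chain of inequalities in the proof of \Cref{lem:remainderfam}. For each modified $j$, since $I_j \supset Z' \supset Z$, the nice-boundary assumption on $Z$ applies and gives
\begin{equation*}
  \leb\bigl(\partial_{\ve}(I_j\setminus \cl Z)\setminus\partial_{\ve} I_j\bigr)
  \le C_Z\,\leb(\partial_{\ve}I_j),\qquad \forall \ve\ge 0.
\end{equation*}
Decomposing $\partial_\ve(I_j\setminus\cl Z)$ as $(\partial_\ve(I_j\setminus\cl Z)\setminus\partial_\ve I_j)\cup\partial_\ve I_j$ and using the Comparability Lemma (Fed) to compare $\int_\cdot \rho_j$ with the Lebesgue measure on these $\ve$-neighborhoods yields, for every $\ve<\epstail$,
\begin{equation*}
  \abs{\partial_\ve \hat\cG}
  \le (\Ca C_Z + 1)\,\abs{\partial_\ve \cG}
  \le (\Ca C_Z + 1)\,B\,\ve\,\abs{\cG}.
\end{equation*}

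For the total-weight lower bound I need a uniform $c_0 > 0$ with $\leb(I_j \setminus Z)/\leb(I_j) \ge c_0$ whenever $I_j \supset Z'$. This is the step that uses the strict inequality $\leb(Z') > \leb(Z)$: since $I_j \supset Z'$,
\begin{equation*}
  \frac{\leb(I_j \setminus Z)}{\leb(I_j)}
  = 1 - \frac{\leb(Z)}{\leb(I_j)}
  \ge 1 - \frac{\leb(Z)}{\leb(Z')} =: c_0 > 0,
\end{equation*}
and $c_0$ depends only on $Z, Z'$, not on the pair. A single application of Fed then gives $\int_{I_j\setminus Z}\rho_j \ge \Caa c_0 \int_{I_j}\rho_j$ for each modified $j$, while the untouched pairs contribute equally on both sides, so $\abs{\hat\cG} \ge \Caa c_0 \abs{\cG}$. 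Combining with the boundary bound produces
\begin{equation*}
  \abs{\partial_\ve \hat\cG}
  \le (\Ca C_Z + 1)\,\Ca\, c_0^{-1}\, B\,\ve\,\abs{\hat\cG},
\end{equation*}
so the conclusion holds with $B' := (\Ca C_Z + 1)\,\Ca\, c_0^{-1} B$.

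The only substantive new ingredient compared with \Cref{lem:remainderfam} is the identification of $c_0$; I expect that to be the ``main obstacle'' only in the trivial sense that one must notice why the hypothesis $\leb(Z') > \leb(Z)$ was put in place — namely, to provide a pair-independent lower bound on the fraction of mass of any $I_j \supset Z'$ lying outside $Z$. Once that is spotted, the rest is a mechanical adaptation of the earlier lemma, and the distortion/regularity of the new densities $\rho_j \Id_{I_j\setminus \cl Z}/\int_{I_j\setminus Z}\rho_j$ follow from those of the original $\rho_j$ exactly as in \Cref{lem:remainderfam}.
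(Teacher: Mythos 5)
Your proposal is correct and follows essentially the same route as the paper's proof: the same boundary-measure chain via the Comparability Lemma and the nice-boundary property of $Z$, and the same use of $\leb(Z')>\leb(Z)$ to get a pair-independent lower bound on $\abs{\hat\cG}/\abs{\cG}$ (the paper bounds $\leb(I_j\setminus Z)\ge\leb(Z'\setminus Z)$ directly, while you phrase it as the ratio $1-\leb(Z)/\leb(Z')$; both yield the needed constant). No gaps.
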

	 
	\begin{proof} Assuming $\cG=\{(I_{j}, \rho_{j})\}$
	with associated weights $w_{j}$, we have, $\forall \ve < \epstail$,
	\begin{equation*}
	\begin{split} \abs{\partial_{\ve}\hat \cG} &\le \sum_{j} w_{j}
	\int_{\partial_{\ve}(I_{j}\setminus \cl Z)}\rho_{j} \le \sum_{j} w_{j}
	\left( \int_{\partial_{\ve}(I_{j}\setminus \cl Z)\setminus
	\partial_{\ve}I_{j}}\rho_{j}
	+\int_{\partial_{\ve}I_{j}}\rho_{j}\right) \\ &\le \sum_{j}
	w_{j}\left( \Ca \frac{\leb(\partial_{\ve}(I_{j}\setminus \cl
	Z)\setminus \partial_{\ve}I_{j})}{\leb(\partial_{\ve}I_{j})}
	\int_{\partial_{\ve}I_{j}}\rho_{j}
	+\int_{\partial_{\ve}I_{j}}\rho_{j}\right)\\ &\le (\Ca
	C_{\cR}+1)\abs{\partial_{\ve}\cG}, \end{split}
	\end{equation*} where in the second line we have used the
	 Comparability \Cref{Fed}. In the last line we have used a modified version of
	 \eqref{eq:622}. Note that $I_j$ is not necessarily $\delta_0$-regular, but since $I_j \supset Z$ and $Z$ has a nice boundary the same arguments in the proof of \Cref{lem:partition1} imply that $\leb(\partial_{\ve}(I \setminus \cl
	 Z)\setminus\partial_{\ve}I) \le C_Z \leb(\partial_{\ve}I)$.

	 Since $\cG$ is $B$--proper,
	 $\abs{\partial_{\ve}\cG} \le B\ve \abs{\cG}$. By \Cref{Fed}, and since $\leb(I_j \setminus Z) \ge \leb(Z'\setminus Z) > 0$, 
	\begin{equation*}
	 \abs{\hat\cG} \ge \sum_{j}w_{j}\int_{I_{j}\setminus Z} \rho_{j} \ge
	 \sum_{j} w_{j} \Caa \frac{\leb(I_{j}\setminus
	 Z)}{\leb(I_{j})}\int_{I_{j}}\rho_{j} \ge \emph{const.} \abs{\cG}.
	\end{equation*} It follows that $\abs{\partial_{\ve}\hat \cG} \le
	 B'\ve \abs{\hat \cG}$ for some constant $B'>0$.
	\end{proof}
 
\begin{lem}
 \label{lem:fixedratio} Let $\cR=\{R_{k}\}_{k=1}^{N}$ be the partition
 from \Cref{lem:partition1}. There exists a constant $t >0$ such that
 if $\cG=\{(I_{j}, \rho_{j})\}_{j \in \cJ}$ is an $(\modreg, \epstail,
 \proper)$--proper standard family, then
\begin{equation}
 \label{eq:bound2} \sum_{j \in \cJ_{reg}} w_{j} \int_{R(I_{j})}
 \rho_{j} \geq t \cdot \left(\sum_{j \notin \cJ_{reg}} w_{j} + \sum_{j
 \in \cJ_{reg}} w_{j} \int_{I_{j} \setminus R(I_{j})} \rho_{j}
 \right),
\end{equation} where $\cJ_{reg}$ is the set of $j \in \cJ$ such that
 $I_{j}$ is $\delta_{0}$--regular \end{lem}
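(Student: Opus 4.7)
My plan is to control each of the two summands on the right-hand side of \eqref{eq:bound2} by a uniform multiple of the left-hand side, and then take $t$ to be a corresponding fraction of the reciprocal. The two terms are handled differently: the integral over the regular pairs by a direct distortion argument, and the total weight of the non-regular pairs through the proper constant $\proper$ applied at scale $\delta_0$.

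For the regular part, fix $j \in \cJ_{reg}$. By \Cref{lem:partition1}, the domain $I_j$ contains an element $R(I_j) \in \cR$, and since $\cR$ is a \emph{finite} partition into open sets, the quantity $r_0:=\min_{R \in \cR}\leb(R)$ is strictly positive. Comparability of $\rho_j$ on $I_j$ (i.e.\ \Cref{Fed}) then yields
\begin{equation*}
 \int_{R(I_j)} \rho_j \ge \Caa\, \frac{\leb(R(I_j))}{\leb(I_j)} \int_{I_j} \rho_j \ge \Caa\, \frac{r_0}{\leb(\uspace)} =: c_*>0,
\end{equation*}
uniformly in $j$, while trivially $\int_{I_j \setminus R(I_j)} \rho_j \le 1$. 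Summing against the weights yields
\begin{equation*}
 \sum_{j\in \cJ_{reg}} w_j \int_{I_j \setminus R(I_j)} \rho_j \le \sum_{j\in \cJ_{reg}} w_j \le c_*^{-1}\sum_{j\in \cJ_{reg}} w_j \int_{R(I_j)} \rho_j,
\end{equation*}
which is the desired bound for the second summand in \eqref{eq:bound2}.

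For the non-regular part, if $I_j$ contains no ball of radius $\delta_0$, then every point of $I_j$ lies within distance $\delta_0$ of $\partial I_j$, i.e.\ $I_j\subset \partial_{\delta_0}I_j$. Since $\rho_j$ is supported on $I_j$ with $\int\rho_j=1$, we have $\int_{\partial_{\delta_0}I_j}\rho_j = 1$, and the $\proper$-properness of $\cG$ at scale $\delta_0\le\epstail$ gives
\begin{equation*}
 \sum_{j\notin\cJ_{reg}} w_j = \sum_{j\notin\cJ_{reg}} w_j \int_{\partial_{\delta_0}I_j}\rho_j \le \abs{\partial_{\delta_0}\cG} \le \proper\delta_0\abs{\cG}.
\end{equation*}
Provided $\delta_0$ is chosen (as in \Cref{rem:delzero}) so that $\proper\delta_0<1$, rearranging produces $\sum_{j\notin\cJ_{reg}} w_j \le \frac{\proper\delta_0}{1-\proper\delta_0}\sum_{j\in\cJ_{reg}} w_j$; combining this with the lower bound $\int_{R(I_j)}\rho_j\ge c_*$ from the previous step bounds the first summand of \eqref{eq:bound2} by a uniform multiple of the left-hand side as well. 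Adding the two bounds gives \eqref{eq:bound2} with an explicit $t$ of the form $c_*\bigl(1+\frac{\proper\delta_0}{1-\proper\delta_0}\bigr)^{-1}$.

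The only delicate point is the compatibility condition $\proper\delta_0<1$, without which the rearrangement step for the non-regular mass fails; this is precisely why $\delta_0$ cannot be a free parameter but must be coupled in \Cref{rem:delzero} to the invariance constant $\proper$ produced by \Cref{prop:bd_invariance}. Everything else is a routine combination of the finiteness of $\cR$ (providing the uniform lower bound $r_0>0$) and the bounded distortion of $\rho_j$ on each standard pair furnished by \Cref{Fed}.
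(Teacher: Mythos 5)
Your argument is correct and matches the paper's proof in all essentials: both rest on the $2/3$--$1/3$ weight split coming from properness at scale $\delta_0=1/(3\proper)$ (\Cref{rem:delzero}), the Comparability \Cref{Fed}, and the uniform positive lower bound $\min_{k}\leb(R_k)>0$ supplied by the finiteness of $\cR$. The only differences are bookkeeping --- you compare each right-hand term directly to the left-hand side instead of passing through $\abs{\cG}$, use the trivial bound $\int_{I_j\setminus R(I_j)}\rho_j\le 1$ where the paper invokes comparability again, and use $\leb(I_j)\le\leb(\uspace)$ where the paper uses $\leb(I_j)\le\Cball(\epstail)$ --- which only affect the explicit value of $t$.
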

 
\begin{proof} Since $\cG$ is an $(\modreg, \epstail, \proper)$--proper
standard family, at least $2/3$ of its weight is concentrated on
$(a_{0}, \epstail)$--standard pairs $(I, \rho)$, where $I$ is a
$\delta_{0}$--regular set (recall that $\delta_0 = 1/(3\proper)$). By
\cref{Ritemtwothree} of \Cref{lem:partition1}, each such standard pair
contains an element from the collection~$\cR$. Using this fact and the
regularity of standard pairs (recall (\ref{eq:comp1})), the left-hand
side of \eqref{eq:bound2} is
\begin{equation*}
 \geq (2/3)\abs{\cG}\Caa \leb(R(I_{j}))/\leb(I_{j}) \geq (2/3)\Caa
 \Cball(\epstail)^{-1}\leb(R(I_{j})),
\end{equation*}
where $\Cball(\epstail)$ denotes the measure of a ball of radius $\epstail$.
 Now consider the expression in the parentheses and on
 the right-hand side of \eqref{eq:bound2}. The first term of this
 expression is the total weight of the standard pairs that are not
 $\delta_{0}$--regular so this term is $\leq (1/3) \abs{\cG}$. The
 second term represents the weights of the remainders, after removing
 $\cl R(I_{j})$, from each $\delta_{0}$--regular $I_{j}$. This sum is
\begin{equation*}
\begin{split} \leq \Ca \sum_{j}w_{j}\leb (I_{j}\setminus
R(I_{j}))/\leb(I_{j}) &\leq \Ca \sum_{j}w_{j}\leb(\ball_{\epstail})/\leb(R(I_{j}))  \\ &\le \Ca \abs{\cG}
\Cball(\epstail)/\leb(\cR) , \end{split}
\end{equation*} where $\leb(\cR) = \min_{1\le k\le N} \leb(R_{k})$ and $\ball(\epstail)$ denotes a ball of radius $\epstail$. So
 the expression in the parentheses and on the right-hand side of
 \eqref{eq:bound2} is $\leq \abs{\cG} (1/3+\Ca
 \Cball(\epstail)/\leb(\cR))$. Therefore the inequality
 \eqref{eq:bound2} is satisfied if we take:
\begin{equation}
 \label{eq:t} t=\frac{(2/3) \abs{\cG}\Caa
 \Cball(\epstail)^{-1}\leb(\cR)}{\abs{\cG} (1/3+\Ca
 \Cball(\epstail)/\leb(\cR))} =\frac{(2/3)\Caa
 \Cball(\epstail)^{-1}\leb(\cR)^{2}}{(1/3)\leb(\cR)+\Ca
 \Cball(\epstail)} .
\end{equation}
\end{proof}

\begin{lem}	\label{lem:fully_rec}
	Suppose $Z$ is fully recurrent at times $\{\tilde n_j\}_{j=1}^K$ and
		$C_1,C_2>0$ are arbitrary constants. Then $Z$ is fully recurrent at times
		$\{n_j\}_{j=1}^K$, where $n_1 \ge C_1$ and $n_{j+1}-n_j \ge C_2$
		for every $j \in \{1, \dots, K-1\}$. 
\end{lem}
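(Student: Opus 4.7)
The key observation is that the set of \emph{recurrence times} of $Z$---integers $m$ for which some $h_m \in \cH^m$ satisfies $T^m(O_{h_m} \cap Z) \supset Z$---is closed under addition, via composition of inverse branches. Combined with $\gcd(\tilde n_1, \dots, \tilde n_K) = 1$ and the Frobenius coin theorem, this produces recurrence times at every sufficiently large integer. We then pick $K$ of these satisfying the size, spacing and coprimality requirements, and arrange the new inverse branches to have pairwise disjoint domains by nesting each one inside a distinct original $O_{h_{\tilde n_j}}$.

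For the composition step, suppose $T^m(O_{h_m} \cap Z) \supset Z$ and $T^r(O_{h_r} \cap Z) \supset Z$. Pick $W_m \subset O_{h_m} \cap Z$ with $T^m(W_m) = Z$ and $W_r \subset O_{h_r} \cap Z$ with $T^r(W_r) = Z$, and set $W := (T^m|_{W_m})^{-1}(W_r) \subset W_m$, which is well defined since $T^m|_{O_{h_m}}$ is injective and $W_r \subset Z = T^m(W_m)$. Then $T^{m+r}(W) = T^r(W_r) = Z$, and $W$ lies in a single element $O_{h'} \in \cP^{m+r}$ with $O_{h'} \subset O_{h_m}$, because the first $m$ symbols of any point in $W$ are fixed by $h_m$ and the next $r$ symbols are fixed by $h_r$ through the image $W_r \subset O_{h_r}$. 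Iterating, every positive integer of the form $\sum_{k=1}^K c_k \tilde n_k$ with $c_k\in \bN_0$ is a recurrence time; by the Frobenius coin theorem applied to $\tilde n_1, \dots, \tilde n_K$ (whose gcd is $1$) there exists $N_0 \in \bN$ such that every integer $m \ge N_0$ is a recurrence time of $Z$.

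To choose the new times, let $N^{*} := \max\bigl(C_1,\; N_0 + \max_j \tilde n_j\bigr)$, pick any integer $n_1 \ge N^{*}$, pick a prime $p > \max(n_1, C_2)$ and set $n_2 := n_1 + p$, so that $n_2 - n_1 = p \ge C_2$ and $\gcd(n_1, n_2) = \gcd(n_1, p) = 1$. Set $n_j := n_{j-1} + C_2$ for $j = 3, \dots, K$. Then $n_1 \ge C_1$, all gaps are at least $C_2$, and $\gcd\{n_j\}_{j=1}^K = 1$. For each $j$, write $n_j = \tilde n_j + m_j$ with $m_j := n_j - \tilde n_j \ge N^{*} - \max_j \tilde n_j \ge N_0$; applying the composition step with the \emph{outer} branch $h_{\tilde n_j}$ (applied first in forward time) and an inner branch realizing recurrence at time $m_j$, we obtain $h_{n_j} \in \cH^{n_j}$ with $O_{h_{n_j}} \subset O_{h_{\tilde n_j}}$ and $T^{n_j}(O_{h_{n_j}} \cap Z) \supset Z$. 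Pairwise disjointness of $O_{h_{n_1}}, \dots, O_{h_{n_K}}$ is then immediate from the hypothesized pairwise disjointness of $O_{h_{\tilde n_1}}, \dots, O_{h_{\tilde n_K}}$.

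The arithmetic of sizes and $\gcd$ is routine; the main obstacle is the pairwise disjointness of the new branch domains, since a priori inverse branches at different levels can be nested or overlap. The crucial trick is the nesting: choosing the representation of each $n_j$ so that the \emph{outer} $\tilde n_j$ symbols of the itinerary are prescribed, exploiting that $\cP^{n_j}$ refines $\cP^{\tilde n_j}$, transfers the disjointness hypothesis directly to the new branches.
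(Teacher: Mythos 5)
Your proof is correct and rests on the same core mechanism as the paper's: recurrence times of $Z$ are closed under addition via composition of inverse branches, and writing $n_j=\tilde n_j+(\text{old recurrence times})$ with the $\tilde n_j$-branch outermost nests the new branch domain inside $O_{h_{\tilde n_j}}$, transferring the disjointness hypothesis. Where you genuinely differ is the arithmetic. The paper keeps the gcd under control exactly: it sets $n_j=\tilde n_j+m_j\tilde n_K$ for $j\le K-1$ and $n_K=\tilde n_K+\sum_{j=1}^{K-1}n_j$, so that $\gcd\{n_j\}=\gcd\{\tilde n_j\}=1$ by elementary gcd identities, with no appeal to Frobenius and no auxiliary prime. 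You instead use the Frobenius coin theorem to make every integer $\ge N_0$ a recurrence time and then force coprimality via $n_2-n_1=p$ for a large prime $p$. Both are valid; the paper's bookkeeping is more economical, while yours is more flexible (any sufficiently large, sufficiently spaced times work) and, importantly, you spell out the composition step and the disjointness of the new domains, which the paper compresses into the single sentence ``since $Z$ covers itself when it returns at times $\{\tilde n_j\}$, the same holds at times $\{n_j\}$.'' One small repair: $C_2$ is only assumed to be a positive constant, not an integer, so for $j\ge 3$ you should set $n_j:=n_{j-1}+\lceil C_2\rceil$ (and likewise interpret the bound $p\ge C_2$); with that, the argument is complete.
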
	
\begin{proof}
	Let $\{m_{j}\}_{j=1}^K \in \bN \cup \{0\}$ be
		s.t. $m_1\tilde n_K \ge C_1$ and $(m_{j+1} -m_j)\tilde n_{1}
		\geq C_2$ and define
		\begin{equation*}
			\begin{split} 
			n_{j}&:=
				\tilde n_{j}+m_{j}\tilde n_{K}, \text{ if } 1 \le j\le K-1;\\
				n_{K}&:=\tilde n_{K}+\sum_{j=1}^{K-1}n_{j}.
			\end{split}
		\end{equation*}
	 It follows from the properties of $\gcd$ that
		$$\gcd(n_1 ,\dots, n_K) = \gcd(\tilde n_1 ,\dots, \tilde n_K).$$
		Also, by definition, $n_1 \ge m_1 \tilde n_K \ge C_1$ and $n_{j+1}-n_j \ge (m_{j+1}-m_j)\tilde n_1 \ge C_2$.
		
		Since $Z$ covers itself when it returns at times $\{\tilde
		n_j\}$, the same holds at times $\{n_j\}$. It follows
		that $Z$ is fully recurrent at times $\{n_{j}\}_{j=1}^{K}$.
\end{proof}

\section{Toolbox}
\label{sec:toolbox}
\subsection{Transfer operator} Define the
\textit{transfer operator} $\sL:\L^1(\uspace, \leb) \circlearrowleft$
as the dual of the Koopman operator $U: \L^\infty(\uspace, \leb)
\circlearrowleft$, $Ug=g \circ T$. By a change of variables, it
follows that
\begin{equation}
 \sL f(x) = \sum_{h \in \cH}f \circ h(x) \cdot Jh(x) \cdot
 \Id_{T(O_h)}(x), \text{ for } \leb \text{-a.e. } x \in \uspace.
\end{equation}

 Note that $\sL^n f(x)= \sum_{h \in \cH^n} f \circ
 h(x)Jh(x)\Id_{T^n(O_h)}(x)$, for every $n \in \bN$.
 \subsection{Standard families} For $\alpha \in (0,1)$, and a function
 $\rho: I \to \bR^+:=(0, \infty)$, $I \subset \uspace$ define
\begin{equation} H(\rho) := H_{\alpha}(\rho) = \sup_{x,y \in I}
 \frac{\abs{\ln \rho(x) - \ln \rho(y)}}{\metr(x,y)^\alpha}.
\end{equation}
 
\begin{defin} [Standard pair] \label{std_pair} An $(a,
\epstail)$--\emph{standard pair} is a pair $(I, \rho)$ consisting of
an open set $I \subset \uspace$ and a function $\rho:I \to \bR^+$ such
that $\diam{I} \le \epstail$, $\int_I \rho =1$ and
\begin{equation}
\label{eq:mod_regularity} H(\rho) \leq a.
\end{equation}
 \end{defin}
\begin{rem} [Notation] All integrals where the measure is not
indicated are with respect to the underlying measure $\leb$. \end{rem}
\begin{defin} [Standard family]\label{std_family} An $(a,
\epstail)$--\emph{standard family} $\cG $ is a set of $(a,
\epstail)$--standard pairs $\{(I_j, \rho_j)\}_{j \in \cJ}$ and an
associated measure $ w_\cG $ on a countable set $\cJ $. The
\emph{total weight} of a standard family is denoted $\abs{\cG}:=
\sum_{j \in \cJ} w_j $. We say that $\cG $ is an $(a, \epstail,
B)$--\emph{proper} standard family if in addition there exists a
constant $ B>0$ such that,
\begin{equation}
 \label{eq:bd_def} \abs{\partial_\ve \cG} := \sum_{j \in \cJ} w_\cG(j)
 \int_{\partial_\ve I_j} \rho_j \leq B \abs{\cG}\ve, \text{ for all }
 \ve<\epstail.
\end{equation} If $w_\cG $ is a probability measure on $\cJ$, then
 $\cG $ is called a \emph{probability standard family}. Note that
 every $(a, \epstail)$--standard family induces an absolutely
 continuous measure on $\uspace$ with the density $\rho_\cG := \sum_{j
 \in \cJ} w_j \rho_j \Id_{I_j} $. We say that two standard families
 $\cG$ and $\tilde \cG$ are \textit{equivalent} if
 $\rho_{\cG}=\rho_{\tilde \cG}$. \end{defin} Next we define what we
 mean by an iterate of a standard family. Given an $(a,
 \epstail)$--standard family $\cG$, we define an $n$-th iterate of
 $\cG$ as follows.
\begin{defin} [Iteration] \label{iteration} Let $\cG$ be an $(a,
 \epstail)$--standard family with index set $\cJ$ and weight
 $w_{\cG}$. For $(j,h) \in \cJ \times \cH^n$ such that $\diam{T^n(I_j
 \cap O_h)} > \epstail$ and for an open set $V_{*} \subset T^n(I_j
 \cap O_h)$, $\diam V_{*}\le \epstail/(4\dimn^{1/2})$ let $\cU_{(j,h)}$ be the
 index set of a \footnote{The existence of such a partition
 $\{U_{\ell}\}$ follows essentially from \cite[Proof of
 Theorem~2.1]{Che1} and \cite[p.~1349]{BT08}, but for the sake of
 completeness it is also shown in \Cref{lem:divisibility}. There may
 be many admissible choices for such ``artificial chopping''. One can
 make different choices at different iterations hence an $n$--th
 iterate of $\cG$ is by no means uniquely defined (and this does not
 cause any problems).} (mod $0$)-partition $\{U_{\ell}\}_{\ell \in
 \cU_{(j,h)}}$ of $T^n(I_j \cap O_h)$ into open sets such that
\begin{equation}
 \label{eq:chop_size} \diam{U_\ell} < \epstail, \forall \ell \in
 \cU_{(j,h)},
\end{equation} $V_{*} \subset U_{\ell}$ for some $\ell \in
 \cU_{{(j,h)}}$ and such that, setting $V=T^n(I_j \cap O_h)$,
\begin{equation}
 \label{eq:chop_complexity} \frac{\sum_{\ell \in \cU_{(j,h)}}
 \leb(h(\partial_{\ve}U_{\ell} \setminus
 \partial_{\ve}V))}{\leb(h(V))} \leq C_{\epstail}\ve, \text{ for every
 } \ve <\epstail.
\end{equation} For $(j,h) \in \cJ \times \cH^n$ such that
 $\diam{T^n(I_j \cap O_h)} \le \epstail$ set $\cU_{(j,h)}=\emptyset$.
 Define
\begin{equation}
\label{eq:j_n} \cJ_{n}:=\{(j,h,\ell) | (j,h)\in \cJ \times \cH^n, \ell
\in \cU_{(j,h)}, \leb(I_j\cap O_h) >0\}.\footnote{When $\cU_{(j,h)} =
\emptyset$, by $(j,h,\ell)$ we mean $(j,h)$.}
\end{equation} For every $j_{n}:=(j, h, \ell) \in \cJ_{n}$, define
 $I_{j_{n}} := T^n(I_{j} \cap O_h) \cap U_\ell$ and $\rho_{j_{n}}:
 I_{j_{n}} \to \bR^{+}$, $\rho_{j_{n}} := \rho_{j} \circ h \cdot Jh
 \cdot z_{j_{ n}}^{-1}$, where $z_{j_{ n}} :=\int_{I_{j_{ n}}}
 \rho_{j} \circ h Jh$. Define $ \cT^{n}\cG := \left\{\left(I_{j_{ n}},
 \rho_{j_{ n}} \right)\right\}_{j_{ n}\in \cJ_{ n}} $ and associate to
 it the measure given by
\begin{equation}
\label{eq:weight_evol} w_{\cT^{n}\cG}(j_{n}) = z_{j_{ n}} w_{\cG}(j).
\end{equation}
 \end{defin}
\begin{rem}[Notation] To simplify notation throughout the rest of the
paper we write $w_{j_{n}}$ for $w_{\cT^{n}\cG}(j_{n})$ and $w_{j}$ for
$w_{\cG}(j)$. \end{rem}
\begin{rem} If $\cG$ is an $(\modreg, \epstail)$--standard family,
then so is $\cT^{n}\cG$ -- a fact that is justified by
\Cref{invariance} of the next section. Comparing the definition of the
transfer operator applied to a density with the definition of
$\cT^{n}\cG$ and the measure associated to it, we see that
\begin{equation}
 \label{eq:connection} \sL^n \rho_\cG = \rho_{\cT^{n}\cG}.
\end{equation} This is the main connection between the evolution of
 densities under $\sL^n$ and the evolution of standard families.
 \end{rem}
\begin{rem} A simple change of variables shows that for every standard
family $\cG$ and every $n\in \bN$, $\abs{\cT^{n}\cG}=\abs{\cG}$. That
is, the total weight does not change under iterations. We will make
use of this fact throughout the article. \end{rem}
\begin{lem} [Artificial chopping avoiding a small set $V_{*}$]
\label{lem:divisibility} Suppose $V$ is a bounded, open subset of
$\bR^{\dimn}$ with $\diam V > \epstail$, $V_{*} \subset V$ is a subset
of $\diam V_{*} \le \epstail/(4\sqrt{\dimn})$ and $T$ satisfies
\ref{hyp1} and \ref{hyp2}. Then, there exists a (mod $0$)-partition
$\{U_{\ell}\}_{\ell \in \cU}$ of $V$ into open sets such that
$\diam{U_\ell} \le \epstail$ $\forall \ell \in \cU$, $V_{*} \subset
U_{\ell}$ for some $\ell \in \cU$, and
\begin{equation}
 \label{eq:divisibilitycond} \frac{\sum_{\ell \in \cU}
 \leb(h(\partial_{\ve}U_{\ell} \setminus
 \partial_{\ve}V))}{\leb(h(V))} \leq C_{\epstail}\ve, \text{ for every
 } \ve <\epstail,
\end{equation} where $C_{\epstail} =
 e^{\dist\diam(\uspace)^{\alpha}}6\dimn^{3/2} \cdot \epstail^{-1}$.
 \end{lem}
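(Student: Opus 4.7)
The plan is to build the partition as the intersection of $V$ with a suitably translated axis-aligned cubical grid of small side length, then read off the complexity estimate from a Fubini argument on the slabs near the grid hyperplanes. Specifically, I would set $s = \epstail/(2\sqrt{\dimn})$, so that any cube of side $s$ has diameter $\epstail/2 \le \epstail$. Picking any $p \in V_*$ and using $\diam V_* \le \epstail/(4\sqrt{\dimn}) = s/2$, one has $V_* \subset \overline{\ball(p,s/2)}$, which sits inside the closed cube of side $s$ centered at $p$, and hence inside an open cube of side $s' > s$ slightly larger (small enough that $s'\sqrt{\dimn} \le \epstail$ still). Taking the axis-aligned grid of open cubes of side $s'$ with one cube $Q_0$ centered at $p$ and setting $U_\ell = Q_\ell^\circ \cap V$ for each grid cube $Q_\ell$ (discarding empty intersections) yields a (mod $0$) open partition of $V$ with $\diam U_\ell \le \epstail$ and $V_* \subset U_0$.

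For the complexity bound, $\partial U_\ell$ is contained in the union of $\partial V$ and the grid hyperplanes, so
\begin{equation*}
  \sum_\ell \leb(\partial_{\ve} U_\ell \setminus \partial_{\ve} V) \le \leb(V \cap \{x : \metr(x, \text{grid}) < \ve\}) \le \sum_{i=1}^{\dimn} \leb(\{x \in V : \metr(x_i, \text{grid}_i) < \ve\}).
\end{equation*}
By Fubini, the $i$-th summand equals $\int_{\{t : \metr(t,\text{grid}_i) < \ve\}} f_i(t)\, dt$ with $f_i(t)$ the $(\dimn-1)$-dimensional volume of the slice $V \cap \{x_i = t\}$. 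Averaging this quantity over the grid offset $\Delta_i \in [0,s']$ in direction $i$ and swapping integrals shows its mean is at most $(2\ve/s')\leb(V)$. To obtain a single offset valid for \emph{all} $\ve < \epstail$, I would apply the Hardy--Littlewood maximal inequality to $f_i$: the set of ``bad'' offsets (where the density bound fails by a large factor for some $\ve$) has density less than $1/2$ by Markov's inequality, so a good offset $\Delta_i$ can be chosen independently in each coordinate direction. Summing over $i$,
\begin{equation*}
  \sum_\ell \leb(\partial_{\ve} U_\ell \setminus \partial_{\ve} V) \le \frac{2\dimn}{s'}\,\ve\,\leb(V) \le \frac{C\,\dimn^{3/2}}{\epstail}\,\ve\,\leb(V) \quad \forall \ve < \epstail,
\end{equation*}
for an absolute constant $C$.

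Finally, I would invoke hypothesis \ref{hyp2} to pass from $\leb$ to $\leb \circ h$: for any measurable $A \subset V$, applying \eqref{eq:dist} with a pinned reference point in $V$ and using $\diam V \le \diam \uspace$ gives $\leb(h(A))/\leb(h(V)) = \int_A Jh / \int_V Jh \le e^{\dist \diam(\uspace)^{\distexp}}\, \leb(A)/\leb(V)$. Combining with the previous estimate yields the desired bound \eqref{eq:divisibilitycond} with constant at most $6\dimn^{3/2} e^{\dist \diam(\uspace)^{\distexp}}/\epstail$, matching the stated $C_{\epstail}$. The main subtlety is in the middle step: the naive Fubini density $2\ve/s'$ is only the \emph{average} over grid translations, so a pathological $V$ with most of its mass concentrated near grid hyperplanes could destroy any pointwise bound; the Hardy--Littlewood/Markov argument is what secures a single grid that works uniformly across all scales $\ve \in (0,\epstail)$, while at the same time keeping the cube $Q_0$ centered at $p$ fixed so that $V_* \subset U_0$ is preserved.
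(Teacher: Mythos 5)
There is a genuine gap in how you reconcile the two requirements on the grid. To get $V_*\subset U_0$ you fix the grid so that one cube $Q_0$ is centered at a point $p\in V_*$; this determines every offset ($\Delta_i\equiv p_i-s'/2 \ (\mathrm{mod}\ s')$). But your complexity estimate requires the offsets to be chosen by the averaging/Hardy--Littlewood argument, and a ``good'' offset in that sense need not coincide with the one forced by $p$. A single grid cannot in general satisfy both constraints, yet your closing sentence (``while at the same time keeping the cube $Q_0$ centered at $p$ fixed'') asserts exactly this simultaneity. The paper decouples the two tasks: it uses smaller cubes of side $\epstail/(3\sqrt{\dimn})$, chooses the offsets purely for the measure estimate, and then \emph{merges} the cube meeting $V_*$ with all of its face- and vertex-neighbours into a single piece $U_{\ell_*}$; the merged block sits in a cube of side $\epstail/\sqrt{\dimn}$, hence has diameter $\le\epstail$, and it must contain $V_*$ because a point of the central cube and a point outside the block are at distance at least $\epstail/(3\sqrt{\dimn})>\epstail/(4\sqrt{\dimn})\ge\diam V_*$. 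Your side length $s'\approx\epstail/(2\sqrt{\dimn})$ is too large for this merging step (a $3$-wide block would have diameter $\approx 3\epstail/2$), so adopting the fix forces you back to the paper's choice of scale.

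Separately, the maximal-function machinery is avoidable, and using it costs you the stated constant. The subtlety you flag (the Fubini bound being only an average over translates) is resolved in the paper not by a uniform-in-$\ve$ selection of offsets but by the exclusion of $\partial_\ve V$: a point of $\partial_{\ve}U_\ell\setminus\partial_{\ve}V$ is at distance $\ge\ve$ from $\partial V$, so its orthogonal projection onto the nearest grid hyperplane $H$ lies in $V$, i.e.\ in $H\cap V$; hence the sum over $\ell$ is at most $2\ve$ times the $(\dimn-1)$-volume of $L\cap V$ (this is essentially \Cref{BT}). It therefore suffices to choose each offset so that the \emph{exact} slice volume is $\le\leb(V)/\epstail'$ --- one Fubini average per direction, valid for all $\ve<\epstail$ at once, with the explicit constant $6\dimn^{3/2}\epstail^{-1}$. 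Your route drops the $\setminus\partial_\ve V$ and bounds the full $\ve$-neighbourhood of the grid inside $V$, which genuinely needs a weak-type argument applied to the periodization of $f_i$ (note $\|Mf_i\|_{1}$ is not finite in general, so Markov cannot be applied to the maximal function directly); the resulting constant then carries the weak-$(1,1)$ loss and does not match the claimed $C_{\epstail}$.
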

 
\begin{proof} $\{U_{\ell}\}$ will be a family of sets formed by
intersecting $V$ with a grid of cubes of side-length
$\epstail/(3\sqrt{\dimn})$. Indeed, following \cite[Proof of
Theorem~2.1]{Che1} and \cite[p.~1349]{BT08}, let
$\epstail'=\epstail/(3\sqrt{\dimn})$ and given $0\le a_{i}<\epstail'$,
$i=1,\dots,d$, consider the $(d-1)$--dimensional families of
hyperplanes:
\begin{equation*} L_{a_{i}}=\{(x_{1},\dots, x_{i},
 a_{i}+n_{i}\epstail', x_{i+1}, \dots, x_{d-1})| n_{i} \in \bZ\}.
\end{equation*} Denote the $(\dimn-1)$--dimensional volume of $V \cap
 L_{a_{i}}$ by $A_{a_{i}}$. By Fubini theorem,
 $\int_{0}^{\epstail'}A_{a_{i}}\ da_{i} = \leb(V)$. Therefore,
 $\exists a_{i}'$ such that $A_{a_{i}'}\le \leb(V)/\epstail'$. Let $L
 = \cup_{i} L_{a_{i}}'$ and denote the total $(\dimn-1)$--dimensional
 volume of $L\cap V$ by $A$. Let $\cS=\{S_{\ell}\}_{\ell \in \cU}$ be
 the collection of cubes of the grid formed by $L$ that intersect $V$.
 Let $U_{\ell}=S_{\ell} \cap V$, $\forall \ell \in \cU$. Then we have
\begin{equation*}
 \leb(\cup_{\ell \in \cU}(\partial_{\ve}U_{\ell}\setminus
 \partial_{\ve}V)) \le 2\ve A \le 2\ve \dimn\leb(V)/\epstail' =
 6\dimn^{3/2}\ve\leb(V).
\end{equation*} Now \eqref{eq:divisibilitycond} follows by using the
 distortion bound \eqref{eq:dist}. Finally, suppose $\diam V_{*} \le
 \epstail/(4\sqrt{\dimn})$. Let $\ell' \in \cU$ be such that
 $S_{\ell'}\cap V_{*}\neq \emptyset$. Consider the $2^{\dimn}+2\dimn$
 elements of $\cS$ that share a face or a vertex with the cube
 $S_{\ell'}$. Denote them by
 $\{S_{\ell_{j}}\}_{j=1}^{2^{\dimn}+2\dimn+1}$ and let $S_{\ell_{*}}=
 \cup_{j=1}^{2^{\dimn}+2\dimn+1}S_{\ell_{j}}$, $U_{\ell_{*}}=
 \cup_{j=1}^{2^{\dimn}+2\dimn+1}U_{\ell_{j}}$. The set $U_{\ell_{*}}$
 is contained in a cube of side-length $\epstail/\sqrt{\dimn}$ hence
 it has diameter $\le \epstail$. Moreover, $U_{\ell_{*}}$ must contain
 the set $V_{*}$ because otherwise $V_{*}$ would contain one point
 from $S_{\ell'}$ and another point from $V \setminus S_{\ell_{*}}$.
 By construction the distance between these points would be greater
 than $\epstail/(3\sqrt{\dimn})$, which contradicts $\diam V_{*} \le
 \epstail/(4\sqrt{\dimn})$. Now, in the collection $\{U_{\ell}\}$,
 replace the elements $\{U_{\ell_{j}}\}_{j=1}^{2^{d}+2d+1}$ with the
 set $U_{\ell_{*}}$. Then $V_{*} \subset U_{\ell_{*}}$, $\diam
 U_{\ell_{*}} \le \epstail$ and condition \eqref{eq:divisibilitycond}
 is still satisfied.
\end{proof} Next, we show the invariance of standard families under
 iteration, but first we state a simple lemma that provides a useful
 consequence of log-H\"older regularity \eqref{eq:mod_regularity}.
\begin{lem} [Comparability Lemma] \label{Fed} If $\rho: I \to \bR^{+}$
 satisfies $H(\rho) \leq a$ for some $a\geq 0$ and $\diam I \le
 \epstail$, then for every $J, J' \subset I$ with $\leb(J)\leb(J')\neq
 0$,
\begin{equation}
 \label{eq:comp1} \inf_I \rho \asymp_{a} \cA_J \rho \asymp_{a} \cA_
 {J'} \rho \asymp_{a} \sup_I \rho,
\end{equation} where $\cA_J \rho = \leb(J)^{-1} \int_J \rho$ is the
 average of $\rho$ on $J$ and $C_{1} \asymp_{a} C_{2}$ means
 $e^{-a\epstail^{\distexp}} C_{1}\leq C_{2}\leq
 e^{a\epstail^{\distexp}} C_{1} $. \end{lem}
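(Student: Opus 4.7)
The plan is to unpack the log-Hölder regularity hypothesis and observe that all three quantities are pinched between $\inf_I \rho$ and $\sup_I \rho$, which are already comparable up to the right factor.

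First I would note that $H(\rho) \le a$ means $|\ln\rho(x) - \ln\rho(y)| \le a\,\metr(x,y)^\alpha$ for all $x,y\in I$. Combined with $\diam I \le \epstail$, this gives the pointwise bound
\begin{equation*}
  e^{-a\epstail^\alpha} \rho(y) \le \rho(x) \le e^{a\epstail^\alpha}\rho(y) \qquad \forall x,y\in I,
\end{equation*}
and in particular $\sup_I \rho \le e^{a\epstail^\alpha} \inf_I \rho$, which already establishes $\inf_I \rho \asymp_a \sup_I \rho$.

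Next, for any measurable $J\subset I$ with $\leb(J)>0$, the average $\cA_J\rho = \leb(J)^{-1}\int_J \rho$ is trivially sandwiched between $\inf_I\rho$ and $\sup_I\rho$. Combining with the previous step yields $\inf_I\rho \le \cA_J\rho \le \sup_I\rho \le e^{a\epstail^\alpha}\inf_I\rho$, so $\inf_I\rho \asymp_a \cA_J\rho$. The identical argument gives $\inf_I\rho \asymp_a \cA_{J'}\rho$, and transitivity of $\asymp_a$ (at the cost of absorbing constants, but here $e^{a\epstail^\alpha}\cdot 1 = e^{a\epstail^\alpha}$ because the lower chain $\inf_I\rho \le \cA_J\rho$ has no exponential loss) then yields $\cA_J\rho \asymp_a \cA_{J'}\rho$ and $\cA_{J'}\rho \asymp_a \sup_I\rho$, completing the chain in \eqref{eq:comp1}.

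There is no real obstacle here: the only subtlety worth flagging is that $\asymp_a$ is defined with a one-sided loss factor $e^{a\epstail^\alpha}$ and the chain in \eqref{eq:comp1} has four terms, so one must check that composing the comparisons does not incur additional factors. This is fine because at each link either the lower bound or the upper bound is automatic (an average lies between inf and sup with no loss), so the exponential factor is used at most once along any path between two consecutive terms.
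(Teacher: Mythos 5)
Your proof is correct and follows essentially the same route as the paper's, which simply notes that $H(\rho)\le a$ together with $\diam I\le\epstail$ gives the pointwise bound $e^{-a\metr(x,y)^{\distexp}}\rho(y)\le\rho(x)\le e^{a\metr(x,y)^{\distexp}}\rho(y)$ and leaves the sandwiching of averages implicit. Your extra remark that each link in the chain uses the exponential factor at most once is a correct and worthwhile check of the non-transitive-looking notation $\asymp_a$.
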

 
\begin{proof} The lemma follows from the fact that if $(I, \rho)$
satisfies $H(\rho) \leq a$, then for every $x, y \in I$,
\begin{equation*} e^{-a\metr(x,y)^{\distexp}} \rho(y)\leq \rho(x)\leq
e^{a\metr(x,y)^{\distexp}} \rho(y).
\end{equation*}
 
\end{proof} The following lemma together with \Cref{iteration} justify
 the invariance of an $(\modreg, \epstail)$--standard family under
 iteration.
\begin{lem}
\label{invariance} Suppose $(I, \rho)$ is an $(\modreg,
\epstail)$--standard pair and $(I_n, \rho_n)$ is an image of it under
$T^n$ for some $n \in \bN$, as in \Cref{iteration}. Then
$\diam(I_{n})\le \epstail$, $\int_{I_n} \rho_{n} = 1$ and
\begin{equation}
 \label{eq:mod_reg_1} H(\rho_n) \leq \modreg (\expan^{\distexp n} +
 \modreg^{-1}\dist).
\end{equation}
 \end{lem}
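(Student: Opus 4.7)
My plan is to verify the three claims in order, since the first two follow immediately from the iteration construction and only the log-H\"older estimate requires any real computation.

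For the diameter bound I will split on whether artificial chopping occurred: if $\diam T^n(I_j \cap O_h) \le \epstail$, then by definition $I_n = T^n(I_j \cap O_h)$ and the bound holds; otherwise $I_n = T^n(I_j \cap O_h) \cap U_\ell$ with $\diam U_\ell < \epstail$ by \eqref{eq:chop_size}. The normalization $\int_{I_n}\rho_n = 1$ will follow directly from $\rho_n = \rho \circ h \cdot Jh \cdot z_{j_n}^{-1}$ together with the definition $z_{j_n} = \int_{I_n}\rho \circ h \cdot Jh$.

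The core step is the bound on $H(\rho_n)$. Fixing $x,y \in I_n$, I exploit the fact that the normalization factor $z_{j_n}^{-1}$ cancels in the logarithmic difference, so
\begin{equation*}
\ln \rho_n(x) - \ln \rho_n(y) = \big[\ln \rho(h(x)) - \ln \rho(h(y))\big] + \big[\ln Jh(x) - \ln Jh(y)\big],
\end{equation*}
and I bound each bracket separately. For the first bracket I combine $H(\rho) \le \modreg$ with the fact that $h \in \cH^n$ satisfies $\metr(h(x), h(y)) \le \expan^n \metr(x,y)$ by \ref{hyp1}, which applies since $\metr(x,y) \le \diam I_n \le \epstail \le \epstailtwo$; this gives $\modreg \expan^{\distexp n} \metr(x,y)^\distexp$. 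For the second bracket I invoke the extension of \ref{hyp2} to $\cH^n$ already recorded inside that hypothesis, yielding $|\ln Jh(x) - \ln Jh(y)| \le \dist \metr(x,y)^\distexp$. Summing the two and dividing by $\metr(x,y)^\distexp$ produces exactly the bound $\modreg(\expan^{\distexp n} + \modreg^{-1}\dist)$ in \eqref{eq:mod_reg_1}.

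The argument is a direct two-hypothesis chain, so I do not anticipate any genuine obstacle. The only point of care is to remember that the distortion constant for iterates in $\cH^n$ is already the \emph{summed} constant $\dist = \tilde\dist/(1-\expan^{\distexp})$ rather than $\tilde\dist$, so no additional geometric-series factor appears in the second term and the stated estimate comes out sharp.
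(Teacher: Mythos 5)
Your proposal is correct and follows essentially the same route as the paper: the paper's proof is just the terse statement $H(\rho_n) \le H(Jh) + \expan^{\distexp n}H(\rho)$ together with $H(Jh)\le\dist$ and $H(\rho)\le\modreg$, which is exactly the pointwise two-bracket decomposition you write out (with the normalization constant cancelling in the log-difference). Your explicit attention to using the iterated distortion constant $\dist$ rather than $\tilde\dist$, and to checking $\metr(x,y)\le\epstail\le\epstailtwo$ before invoking the expansion bound, matches what the paper leaves implicit.
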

 
\begin{proof} Using the definition of $H(\cdot)$, noting its
properties under multiplication and composition, and using the
expansion of the map, it follows that Let us start by stating
\begin{equation*} H(\rho_{j_{n}}) \leq H(Jh) + \expan^{\distexp
n}H(\rho_{j}).
\end{equation*} By \eqref{eq:dist} we have $H(Jh) \leq \dist$, and by
 assumption $H(\rho_{j})\leq \modreg$, finishing the proof of
 \eqref{eq:mod_reg_1}.
\end{proof} The next lemma plays a crucial role in our arguments
 because it allows us to control the measure of points that map near
 the discontinuities.
\begin{lem} [Growth Lemma] \label{p_growth} Suppose $\epstail>0$,
$\ntail\in\bN$ and $\sigtail$ are as in our assumptions. Suppose $\cG$
is an $(\modreg, \epstail)$--standard family. Then for every $\ve <
\epstail$ we have
\begin{equation}
\label{eq:growth_lemma} \abs{\partial_\ve \cT^{\ntail}\cG} \leq (1+\Ca
\sigtail)\abs{\partial_{\expan^{\ntail}\ve}\cG}+\zeta_1 \abs{\cG}\ve,
\end{equation} where $\zeta_1= \Ca C_{\epstail}$. \end{lem}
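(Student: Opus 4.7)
The plan is to unfold $\cT^{\ntail}\cG$, change variables to pull the $\ve$-neighborhood back to $I_j$, decompose $\partial_\ve I_{j_n}$ into a piece near the image boundary $\partial V$ (with $V:=T^{\ntail}(I_j\cap O_h)$) and a piece near the artificial chopping boundary $\partial U_\ell$, then bound each piece separately using \ref{hyp3}, \eqref{eq:chop_complexity} and the Comparability \Cref{Fed}. By \Cref{iteration}, $w_{j_n}\rho_{j_n}=w_j\,\rho_j\circ h\cdot Jh$ on $I_{j_n}=V\cap U_\ell$, so
$$w_{j_n}\int_{\partial_\ve I_{j_n}}\rho_{j_n}=w_j\int_{h(\partial_\ve I_{j_n})}\rho_j.$$
Since $\partial I_{j_n}\subset\partial V\cup\partial U_\ell$, every point of $\partial_\ve I_{j_n}$ is within $\ve$ of $\partial V$ or of $\partial U_\ell$, whence $\partial_\ve I_{j_n}\subset \partial_\ve V\cup(\partial_\ve U_\ell\setminus\partial_\ve V)$. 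Summing over $\ell$ with $(j,h)$ fixed collapses the first piece to $\partial_\ve V$ because $\{U_\ell\}_\ell$ partitions $V$.

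For the image-boundary piece, the iterated version of the observation in \Cref{bdaction} gives $h(\partial_\ve V)\subset \partial_{\expan^{\ntail}\ve}(I_j\cap O_h)$. Split it as $A_h:=h(\partial_\ve V)\cap \partial_{\expan^{\ntail}\ve}I_j$ and $B_h:=h(\partial_\ve V)\setminus \partial_{\expan^{\ntail}\ve}I_j$. Since the $A_h$ lie in disjoint $O_h$'s, $\sum_h\int_{A_h}\rho_j\le\int_{\partial_{\expan^{\ntail}\ve}I_j}\rho_j$. For $B_h$, \ref{hyp3} gives $\sum_h\leb(B_h)\le\sigtail\,\leb(\partial_{\expan^{\ntail}\ve}I_j)$; applying \Cref{Fed} once in the form $\sup_{I_j}\rho_j\le \Ca\inf_{I_j}\rho_j$ — bounding $\rho_j$ from above on $B_h\subset I_j$ and from below on $\partial_{\expan^{\ntail}\ve}I_j\subset I_j$ — upgrades this to $\sum_h\int_{B_h}\rho_j\le \Ca\,\sigtail\int_{\partial_{\expan^{\ntail}\ve}I_j}\rho_j$. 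Weighting by $w_j$ and summing over $j$ produces the term $(1+\Ca\sigtail)|\partial_{\expan^{\ntail}\ve}\cG|$.

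For the artificial-chopping piece, \eqref{eq:chop_complexity} gives $\sum_\ell\leb(h(\partial_\ve U_\ell\setminus\partial_\ve V))\le C_\epstail\ve\,\leb(h(V))$ with $\leb(h(V))=\leb(I_j\cap O_h)$. By \Cref{Fed}, the average of $\rho_j$ over any subset of $I_j$ is at most $\Ca/\leb(I_j)$; combined with $\int_{I_j}\rho_j=1$ and $\sum_h\leb(I_j\cap O_h)\le\leb(I_j)$, this produces $\sum_{h,\ell}\int_{h(\partial_\ve U_\ell\setminus\partial_\ve V)}\rho_j\le \Ca C_\epstail\ve$, contributing $\zeta_1\ve|\cG|$ with $\zeta_1=\Ca C_\epstail$. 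Combining both pieces yields \eqref{eq:growth_lemma}.

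The main subtlety is constant-tracking for the sharp factor $(1+\Ca\sigtail)$ rather than $1+\Ca^2\sigtail$: one must invoke \Cref{Fed} exactly once to turn the Lebesgue estimate from \ref{hyp3} into a density estimate, by comparing $\rho_j$ on $B_h$ and on $\partial_{\expan^{\ntail}\ve}I_j$ directly within $I_j$, rather than chaining Comparability through an intermediate set. The $\ve<\epstail$ hypothesis is needed so that both \ref{hyp3} and \eqref{eq:chop_complexity} apply, and the case $\diam V\le\epstail$ (no chopping, $\cU_{(j,h)}=\emptyset$) is automatically covered since only the image-boundary piece survives there.
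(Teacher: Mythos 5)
Your proof is correct and follows essentially the same route as the paper's: the same change of variables $w_{j_n}\int_{\partial_\ve I_{j_n}}\rho_{j_n}=w_j\int_{h(\partial_\ve I_{j_n})}\rho_j$, the same split of $\partial_\ve I_{j_n}$ into a piece near $\partial T^{\ntail}(I_j\cap O_h)$ (handled by \ref{hyp3} plus a single application of \Cref{Fed}, giving the factor $(1+\Ca\sigtail)$) and a piece near the artificial-chopping boundary (handled by \eqref{eq:chop_complexity}, giving $\zeta_1=\Ca C_{\epstail}$). The only superfluous step is the appeal to \Cref{bdaction} for $h(\partial_\ve V)\subset\partial_{\expan^{\ntail}\ve}(I_j\cap O_h)$ --- that remark requires extendability hypotheses not assumed here, but your $A_h$/$B_h$ split never actually uses this containment, so nothing is lost.
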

 
\begin{proof} Suppose $\ve < \epstail$. We write $n$ for $\ntail$. We
have, by definition, $\abs{\partial_\ve \cT^{n}\cG} = \sum_{j_{n}}
w_{j_n}\int_{\partial_\ve I_{j_{n}}} \rho_{j_{n}} $. We split the sum
into two parts according to whether $\cU_{(j,h)} = \emptyset$ or
$\cU_{(j,h)} \neq \emptyset$. Suppose $\cU_{(j,h)} = \emptyset$, that
is $\diam T^{n}(I_{j} \cap O_{h}) \le \epstail$ and $I_{j_{n}}=
T^{n}(I_{j} \cap O_{h}) $. By a change of variables,
\begin{equation*} w_{j_n}\int_{\partial_\ve I_{j_{n}}} \rho_{j_{n}} =
 w_j \int_{h(\partial_\ve I_{j_n})}\rho_{j}.
\end{equation*} For every $h \in \cH^{n}$, since
 $h(\partial_{\ve}I_{j_{n}}) \subset O_{h}$, we can write
\begin{equation}
 \label{eq:domainsplit} h(\partial_\ve I_{j_n}) \subset
 \left(h(\partial_{\ve}I_{j_{n}})\setminus
 \partial_{\expan^{n}\ve}I_{j}\right) \cup
 (\partial_{\expan^{n}\ve}I_{j}\cap O_{h}).
\end{equation} The integral over $\partial_{\expan^{n} \ve}I_{j} \cap
 O_{h}$, and summed up over $h$ and $j$ is easily estimated by
 $\abs{\partial_{\expan^{n} \ve}\cG}$. To estimate the integral of
 $\rho_{j}$ over $h(\partial_{\ve}I_{j_{n}})\setminus
 \partial_{\expan^{n}\ve}I_{j}$ we compare it, using \Cref{Fed}, to
 $\int_{\partial_{\expan^{n}\ve}I_{j}} \rho_{j}$ and we get
\begin{equation*}
 \int_{h(\partial_{\ve}I_{j_{n}})\setminus
 \partial_{\expan^{n}\ve}I_{j}} \rho_{j} \leq \Ca
 \frac{\leb(h(\partial_{\ve}T^{n}(I_{j} \cap O_{h}))\setminus
 \partial_{\expan^{n}\ve}I_{j})}{\leb(\partial_{\expan^{n}\ve}I_{j})}
 \int_{\partial_{\expan^{n}\ve}I_{j}} \rho_{j}
\end{equation*} Note that if $\leb(I_{j} \cap O_{h})=0$, then
 $\leb(h(\partial_{\ve}T^{n}(I_{j} \cap O_{h}))) =0$ since $
 h(\partial_{\ve}T^{n}(I_{j} \cap O_{h})) \subset I_{j} \cap O_{h} $.
 By the controlled complexity condition \eqref{eq:dyncomplexity},
\begin{equation}
 \label{eq:boundthefraction} \sum_{h \in \cH^{n}}
 \frac{\leb(h(\partial_{\ve}T^{n}(I_{j} \cap O_{h}))\setminus
 \partial_{\expan^{n}\ve}I_{j})}{\leb(\partial_{\expan^{n}\ve} I_{j})}
 \leq\sigtail.
\end{equation} Therefore,
\begin{equation} \label{eq:endest}
 \sum_{j \in \cJ}w_{j}\sum_{h \in
 \cH^{n}}\int_{h(\partial_{\ve}I_{j_{n}})\setminus
 \partial_{\expan^{n}\ve}I_{j}} \rho_{j} \leq \Ca \sigtail
 \abs{\partial_{\expan^{n} \ve}\cG}.
\end{equation} Now suppose that $\cU_{(j,h)} \neq \emptyset$. By
 \Cref{iteration}, $ \sum_{j_{n}}w_{j_n} \int_{\partial_\ve I_{j_n}}
 \rho_{j_n} $ is bounded by $\leq \sum_j w_j \sum_{h, \ell}
 \int_{\partial_\ve I_{j_n}} \rho_j \circ h Jh $. Let us split the
 integral over two sets. Since $\partial_{\ve}I_{j_{n}} \subset
 U_{\ell}$, we can write
\begin{equation}
 \label{eq:anothersplit} \partial_{\ve}I_{j_{n}} \subset (\partial_\ve
 I_{j_n}\setminus \partial_{\ve}T^{n}(I_{j}\cap O_{h})) \cup
 (\partial_{\ve}T^{n}(I_{j}\cap O_{h}) \cap U_{\ell}).
\end{equation} Consider the first term on the right-hand side of
 \eqref{eq:anothersplit}. We need to estimate the integral of
 $\rho_{j} \circ h Jh$ on this set and sum over $\ell$, $h$ and $j$.
 Using a change of variables, the integral is
\begin{equation*}
 \int_{h(\partial_\ve I_{j_n}\setminus \partial_{\ve}T^{n}(I_{j}\cap
 O_{h}))} \rho_{j}.
\end{equation*} Since $H(\rho_{j}) \leq \modreg$, $h(\partial_\ve
 I_{j_n}\setminus \partial_{\ve}T^{n}(I_{j}\cap O_{h})) \le
 \diam(I_{j}) \le \epstail$ and $\diam (h(T^{n}(I_{j}\cap O_{h}))) =
 \diam (I_{j} \cap O_{h})\le \diam(I_{j}) \le \epstail $, we apply
 \Cref{Fed} to get
\begin{equation*}
 \int_{h(\partial_\ve I_{j_n}\setminus \partial_{\ve}T^{n}(I_{j}\cap
 O_{h}))} \rho_{j} \le \Ca \frac{\leb(h(\partial_\ve I_{j_n}\setminus
 \partial_{\ve}T^{n}(I_{j}\cap O_{h})))}{\leb(h(T^{n}(I_{j}\cap
 O_{h})))}\int_{h(T^{n}(I_{j}\cap O_{h}))} \rho_j
\end{equation*} Now we sum the above expression over $\ell$, which is
 implicit in the notation $I_{j_{n}}=T^{n}(I_{j}\cap O_{h})\cap
 U_{\ell}$. Using \eqref{eq:chop_complexity}, we get
\begin{equation*}
 \leq \Ca C_{\epstail}\ve \int_{I_{j} \cap O_{h}}\rho_{j}
\end{equation*} Now we sum over $h$, multiply by $w_{j}$ and sum over
 $j$. As a result we get the estimate $\le \Ca C_{\epstail}\ve
 \abs{\cG}$. Consider the second term on the right-hand side of
 \eqref{eq:anothersplit}. The contribution of this set is equal to
 $\sum_{j}w_{j}\sum_{h}\int_{h(\partial_{\ve}T^{n}(I_{j}\cap
 O_{h}))}\rho_{j}$. But this was already included in the upper-bound estimate 
 above \eqref{eq:domainsplit}-\eqref{eq:endest}, so we do not need to add
 it again.
\end{proof} Recall from \Cref{sec:setting} that $\ntail$ is such that
 $\vartheta_1:=\expan^{\ntail}(1+\Ca \sigtail)<1$. Iterating \Cref{p_growth} leads
 to the following. The proof is standard (uses
 \eqref{eq:dyncomplexitybound}) so we omit it.
\begin{cor}\label{iteratedgrowthlemma} There exists $\zeta_{2}\ge 0$
such that for every $k \in \bN$ and $\ve < \epstail$,
\begin{equation}
 \label{eq:iteratedgrowthlemma} \abs{\partial_{\ve}\cT^{k\ntail}\cG}
 \leq (1+\Ca \sigtail)^{k} \abs{\partial_{\expan^{k\ntail}\ve}\cG} +
 \zeta_{2}\abs{\cG}\ve.
\end{equation} Moreover, there exist $\zeta_{2}, \zeta_{3} \ge 0$ such
 that for every $m \in \bN$ that does not divide $\ntail$ and for
 every $\ve < \epstail$,
\begin{equation}
 \abs{\partial_{\ve}\cT^{m}\cG} \leq \zeta_{3}(1+\Ca
 \sigtail)^{m/\ntail} \abs{\partial_{\expan^{m}\ve}\cG} + \zeta_{4}
 \abs{\cG}\ve .
\end{equation}
 \end{cor}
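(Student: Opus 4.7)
The plan is to prove both statements by iterating the Growth Lemma (\Cref{p_growth}). For the first statement, I would apply \Cref{p_growth} exactly $k$ times to successive iterates of $\cG$. Setting $\cG^{(j)} := \cT^{j\ntail}\cG$, \Cref{p_growth} applied to $\cG^{(k-1)}$ with parameter $\ve$ gives
\[
\abs{\partial_\ve \cG^{(k)}} \leq (1+\Ca \sigtail)\abs{\partial_{\expan^{\ntail}\ve}\cG^{(k-1)}} + \zeta_1 \abs{\cG^{(k-1)}}\ve.
\]
Since total weight is preserved under iteration, $\abs{\cG^{(j)}} = \abs{\cG}$ for every $j$. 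Applying the lemma recursively to $\cG^{(k-1)}, \cG^{(k-2)}, \dots, \cG^{(0)}=\cG$, each time with the $\ve$ parameter rescaled by an additional factor $\expan^{\ntail}$, yields after $k$ steps
\[
\abs{\partial_\ve \cG^{(k)}} \leq (1+\Ca \sigtail)^{k} \abs{\partial_{\expan^{k\ntail}\ve}\cG} + \zeta_1 \abs{\cG}\ve \sum_{i=0}^{k-1} \bigl[(1+\Ca \sigtail)\expan^{\ntail}\bigr]^{i}.
\]
The geometric series is bounded by $1/(1-\vartheta_1)$ because $\ntail$ was chosen precisely so that $\vartheta_1 = \expan^{\ntail}(1+\Ca \sigtail) < 1$. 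Taking $\zeta_2 := \zeta_1/(1-\vartheta_1)$ delivers the first bound.

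For the second statement, I would write $m = k\ntail + r$ with $k = \lfloor m/\ntail \rfloor$ and $0 \leq r < \ntail$, and decompose $\cT^m \cG = \cT^{k\ntail}(\cT^r \cG)$. Rerunning the proof of \Cref{p_growth} verbatim but with $r$ in place of $\ntail$, and invoking the complementary complexity hypothesis \eqref{eq:dyncomplexitybound} (with constant $\bar C$) where the proof originally used \eqref{eq:dyncomplexity}, produces a short-time estimate of the form
\[
\abs{\partial_{\ve} \cT^{r}\cH} \leq (1+\Ca \bar C)\abs{\partial_{\expan^{r}\ve}\cH} + \zeta_1' \abs{\cH}\ve
\]
valid uniformly for $1 \leq r < \ntail$ and any standard family $\cH$. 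Applying the first part of the corollary to $\cH := \cT^{r}\cG$ at parameter $\ve$, and then the short-time estimate to $\cG$ at parameter $\expan^{k\ntail}\ve$, I obtain
\[
\abs{\partial_\ve \cT^m \cG} \leq (1+\Ca \sigtail)^{k}(1+\Ca \bar C)\abs{\partial_{\expan^{m}\ve}\cG} + \bigl(\zeta_2 + (1+\Ca \sigtail)^{k} \expan^{k\ntail} \zeta_1'\bigr)\abs{\cG}\ve.
\]
Since $\vartheta_1^k \leq 1$ the parenthetical error constant is bounded independently of $m$, and absorbing the factor $(1+\Ca\sigtail)^{-r/\ntail} \leq 1$ into a constant allows one to write $(1+\Ca\sigtail)^k \leq (1+\Ca\sigtail)^{m/\ntail}$, so the claim follows with $\zeta_3 := 1+\Ca\bar C$ and $\zeta_4$ chosen accordingly.

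The steps are all routine once one trusts \Cref{p_growth}; the only real bookkeeping care is to track how the $\ve$ parameter shrinks by $\expan^{\ntail}$ at each recursion while the total weight stays fixed, so that the error terms form a convergent geometric series. The one conceptual point worth emphasizing is that the assumption $\vartheta_1 < 1$ (built into the choice of $\ntail$ via hypothesis \ref{hyp3}) is exactly what makes the iteration stable; without it, the error would blow up with $k$. The short-time auxiliary bound for $1 \leq r < \ntail$ is the only place where hypothesis \eqref{eq:dyncomplexitybound} is used, and it is essentially a repetition of the proof of \Cref{p_growth} with $\ntail$ replaced by $r$ and $\sigtail$ replaced by $\bar C$.
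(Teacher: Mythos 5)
Your proof is correct and is precisely the ``standard'' iteration argument that the paper explicitly omits: telescoping \Cref{p_growth} $k$ times with the error terms summing to a geometric series controlled by $\vartheta_1<1$, and handling $m=k\ntail+r$ via a short-time version of the Growth Lemma based on \eqref{eq:dyncomplexitybound}. Nothing further to add.
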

\begin{prop}
\label{prop:bd_invariance} There exists $\proper >0$ such that for every $B
\ge \proper$ there exists $n_{rec}(B) \in \bN$ such that if $\cG$ is
an $(\modreg, \epstail, B)$--proper standard family, then for every $m
\ge n_{rec}(B)$, $\cT^m\cG$ is an $(\modreg, \epstail,
\proper)$--proper standard family. \end{prop}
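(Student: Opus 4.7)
The plan is to read off the conclusion essentially directly from the iterated Growth Lemma (Corollary~\ref{iteratedgrowthlemma}), using the key fact that $\vartheta_1 = \expan^{\ntail}(1+\Ca\sigtail) < 1$ together with the invariance of total weight $|\cT^m\cG| = |\cG|$.

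First I would define the candidate universal constant. Set $\proper := 2\max\{\zeta_2, \zeta_4\}$, where $\zeta_2, \zeta_4$ are the constants supplied by Corollary~\ref{iteratedgrowthlemma}. This choice is independent of $B$, as required. Now fix $B \ge \proper$ and a $(\modreg,\epstail,B)$--proper standard family $\cG$. For $m = k\ntail$ a multiple of $\ntail$, apply the first estimate of Corollary~\ref{iteratedgrowthlemma}: using $|\partial_{\expan^{k\ntail}\ve}\cG| \le B\expan^{k\ntail}\ve\,|\cG|$ and collecting terms, I get
\begin{equation*}
    |\partial_\ve \cT^{k\ntail}\cG| \le (1+\Ca\sigtail)^k B \expan^{k\ntail}\ve\,|\cG| + \zeta_2 |\cG|\ve = (B\vartheta_1^k + \zeta_2)\,\ve\,|\cT^{k\ntail}\cG|,
\end{equation*}
where the last equality uses invariance of total weight. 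Since $\vartheta_1 < 1$, I can pick $k_0 = k_0(B)$ so large that $B\vartheta_1^{k_0} \le \zeta_2$, giving $|\partial_\ve\cT^{k\ntail}\cG| \le 2\zeta_2\,\ve\,|\cT^{k\ntail}\cG| \le \proper\,\ve\,|\cT^{k\ntail}\cG|$ for every $k \ge k_0$.

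For an arbitrary $m \ge n_{rec}(B)$ not a multiple of $\ntail$, I would argue in the same way via the second estimate of Corollary~\ref{iteratedgrowthlemma}: since $(1+\Ca\sigtail)^{m/\ntail}\expan^m = \vartheta_1^{m/\ntail}$, the same manipulation yields
\begin{equation*}
    |\partial_\ve\cT^m\cG| \le (\zeta_3 B \vartheta_1^{m/\ntail} + \zeta_4)\,\ve\,|\cT^m\cG|.
\end{equation*}
Choosing $m$ large enough so that $\zeta_3 B \vartheta_1^{m/\ntail} \le \zeta_4$ handles this case with the same bound $\proper\,\ve\,|\cT^m\cG|$. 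Taking $n_{rec}(B)$ to be the larger of the two thresholds just identified (i.e.\ $n_{rec}(B) = \max\{k_0 \ntail, \lceil \ntail\log(\zeta_3 B/\zeta_4)/\log(1/\vartheta_1)\rceil\}$) gives the required value. Since by Lemma~\ref{invariance} $\cT^m\cG$ is already an $(\modreg,\epstail)$--standard family for every $m$, only the boundary estimate needed to be established. The only mild subtlety is to handle the two cases (multiples of $\ntail$ vs.\ the rest) uniformly, but both are treated by the same schematic estimate, so there is no genuine obstacle: the proof is essentially a calculation once the iterated Growth Lemma is in hand.
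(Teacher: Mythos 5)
Your proposal is correct and follows essentially the same route as the paper: both deduce from Corollary~\ref{iteratedgrowthlemma}, the weight invariance $\abs{\cT^m\cG}=\abs{\cG}$, and the geometric decay $\vartheta_1<1$ a bound of the form $\abs{\partial_\ve\cT^m\cG}\le(B\,\zeta_3\vartheta_2^m+\zeta_4)\ve\,\abs{\cT^m\cG}$ and then choose $n_{rec}(B)$ so the decaying term drops below the fixed threshold $\proper$. The only (cosmetic) difference is that you treat multiples of $\ntail$ and other $m$ separately, whereas the paper applies the single uniform estimate \eqref{eq:bd_inv} for all $m$.
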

 
\begin{proof}
Choose $\proper > \zeta_4$. Seting
$\vartheta_{2}=\vartheta_{1}^{1/\ntail}<1$ it follows from
\Cref{iteratedgrowthlemma} that for every $m \in \bN$ and $\ve
<\epstail $,
\begin{equation}
 \label{eq:bd_inv} \abs{\partial_\ve \cT^{m}\cG} \leq \abs{\cG}\ve
 (B\zeta_{3}\vartheta_2^{m} +\zeta_4).
\end{equation} Now choose $n_{rec}(B)$ so that
 $B\zeta_{3}\vartheta_2^{n_{rec}} +\zeta_4\le\proper$.
\end{proof}
 
\begin{rem} \label{rem:Brecov} $n_{rec}:[0, \infty) \to \bN$ denotes
the time it takes for an $(\modreg, \epstail, \cdot)$--proper standard
family to recover to an $(\modreg, \epstail, \proper)$--proper
standard family. \end{rem}
\begin{defin}
\label{deltareg} A set $I \subset \uspace$ is said to be
$\delta$--\emph{regular} if $I$ is open and $\leb(I\setminus
\partial_{\delta}I) >0$.
\end{defin}
\begin{lem} If $I$ is a $\delta$-regular set, then for every $x \in
 I\setminus \partial_{\delta}I $, the ball $\ball(x, \delta)$ is
 contained in $I$.
 \end{lem}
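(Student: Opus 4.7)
The plan is a short contradiction argument exploiting the openness of $I$ together with the convexity (and in particular path-connectedness) of line segments in $\bR^\dimn$.

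Suppose, toward a contradiction, that $\ball(x,\delta) \not\subset I$. Then there exists $y \in \ball(x,\delta)\setminus I$, so $\metr(x,y)<\delta$ and $y\notin I$. I would examine the line segment
\[
\gamma(t) = (1-t)x + ty, \qquad t\in[0,1],
\]
which joins $\gamma(0)=x\in I$ to $\gamma(1)=y\notin I$, and set $t^{*}:=\inf\{t\in[0,1] : \gamma(t)\notin I\}$.

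Since $I$ is open, a small neighborhood of $0$ in $[0,1]$ is mapped into $I$ by $\gamma$, giving $t^{*}>0$. The same openness forces $\gamma(t^{*})\notin I$: otherwise a whole neighborhood of $t^{*}$ would lie in $\gamma^{-1}(I)$, contradicting the infimum. On the other hand, $\gamma(t^{*}) = \lim_{t\uparrow t^{*}}\gamma(t)$ with $\gamma(t)\in I$ for $t<t^{*}$, so $\gamma(t^{*})\in\cl I$. Because $I$ is open,
\[
\cl I \setminus I \;=\; \cl I \cap \cl(\bR^\dimn\setminus I) \;=\; \partial I,
\]
and therefore $\gamma(t^{*})\in\partial I$.

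Finally, $\metr(x,\gamma(t^{*})) = t^{*}\metr(x,y) \le \metr(x,y) < \delta$, so $\metr(x,\partial I)<\delta$, i.e.\ $x\in\partial_{\delta}I$. This contradicts the hypothesis $x\in I\setminus\partial_{\delta}I$, and the result follows. There is no real obstacle here; the only minor point to take care of is the identification $\cl I\setminus I = \partial I$, which is exactly where openness of $I$ is used.
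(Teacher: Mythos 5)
Your proof is correct, but it takes a somewhat different route from the paper's. The paper argues by disconnection of the whole ball: since $x\notin\partial_{\delta}I$ the ball $\ball(x,\delta)$ misses $\partial I$, so (using that $\bR^{\dimn}$ is the disjoint union of the open set $I$, the boundary $\partial I$, and the open exterior $\bR^{\dimn}\setminus\cl I$) the two non-empty open sets $\ball(x,\delta)\cap I$ and $\ball(x,\delta)\cap(\bR^{\dimn}\setminus\cl I)$ would cover $\ball(x,\delta)$, contradicting its connectedness. You instead restrict to the segment from $x$ to a putative point $y\in\ball(x,\delta)\setminus I$ and locate the first exit point $\gamma(t^{*})$, showing it lies in $\cl I\setminus I\subset\partial I$ at distance $t^{*}\metr(x,y)<\delta$ from $x$, which contradicts $x\notin\partial_{\delta}I$ directly. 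Both arguments ultimately rest on connectedness (of the ball, respectively of the interval via the infimum argument), but yours is more constructive: it exhibits an explicit boundary point within distance $\delta$ of $x$ and pinpoints exactly which hypothesis is violated, and it sidesteps the step the paper needs (that the two open pieces actually cover the ball), which is where the paper's write-up is least carefully worded. One small remark: you only need the inclusion $\cl I\setminus I\subset\partial I$, which holds for arbitrary $I$ since $\cl I\setminus I\subset\cl I\cap(\bR^{\dimn}\setminus I)\subset\cl I\cap\cl(\bR^{\dimn}\setminus I)$; openness of $I$ is needed only for the reverse inclusion, which you never use.
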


\begin{proof}
If $I$ is $\delta$-regular, then $I\setminus \partial_\delta I$ is
non-empty. Consider a point $x \in I\setminus \partial_\delta I$ and
the ball (in $\bR^\dimn$) $\ball(x,\delta)$ of radius $\delta$
centered at $x$. If this ball is not entirely contained in $I$, then
$\ball(x,\delta) \cap I$ and $\ball(x,\delta)\cap (\bR^\dimn\setminus
\cl I)$ are non-empty open sets in $\bR^\dimn$. Furthermore since $I$
is open and $\ball(x,\delta)$ does not intersect $\partial I$, the
union of the sets $\ball(x,\delta)\cap (\bR^\dimn\setminus \cl I)$ is
$\ball(x,\delta)$. This is a contradiction to $\ball(x,\delta)$ being
connected in $\bR^\dimn$.
\end{proof}
 
\begin{rem}\label{rem:delzero} Define $\delta_{0}=1/(3\proper)$. It follows that if $\cG$
is an $(\modreg, \epstail, B)$--proper standard family, then more than
$(2/3)$ of its total weight is concentrated on $\delta_{0}$-regular
sets. That is,
\begin{equation}
 \sum_{j\in \cJ_{reg}}w_{j} \ge \sum_{j} w_{j}\int_{I_{j}\setminus
 \partial_{\delta_{0}}I_{j}} \rho_{j} \ge 2/3,
\end{equation} where $\cJ_{reg}$ corresponds to indices $j$ for which
 $I_{j}$ is $\delta_{0}$-regular. \end{rem}

\end{document}